\newtheorem{conjecture}{Conjecture}
\newtheorem{theorem}{Theorem}
\newtheorem{lemma}{Lemma}
\begin{document}

\title{Moments of zeta and correlations of divisor-sums: V}
 
\author{Brian Conrey}
\address{American Institute of Mathematics, 360 Portage Ave, Palo Alto, CA 94306, USA and School of Mathematics, University of Bristol, Bristol BS8 1TW, UK}
\email{conrey@aimath.org}
\author{Jonathan P. Keating}
\address{School of Mathematics, University of Bristol, Bristol BS8 1TW, UK}
\email{j.p.keating@bristol.ac.uk}

\thanks{We gratefully acknowledge support under EPSRC Programme Grant EP/K034383/1
LMF: $L$-Functions and Modular Forms.  Research of the first author was also supported 
by the American Institute of Mathematics and by a grant from the National Science 
Foundation. JPK is grateful for the following additional support: a Royal Society Wolfson Research 
Merit Award, a Royal Society Leverhulme Senior Research Fellowship, a grant from the Air Force Office of Scientific Research, Air Force Material Command, 
USAF (number FA8655-10-1-3088), and ERC Advanced Grant 740900 (LogCorRM). He is also pleased to thank the American Institute of Mathematics for hospitality during two visits when work on this project was conducted.  Finally, we are grateful to the referees for their careful reading of the manuscript and for extremely helpful comments and suggestions.}

\date{\today}

\begin{abstract} In this series of papers 
we examine the calculation of the $2k$th  moment and shifted moments of the Riemann 
zeta-function on the critical line using long Dirichlet 
polynomials and divisor correlations.   The present paper completes the general study 
of what we call Type II sums which utilize a circle method framework 
and a convolution of shifted convolution sums
  to obtain all of the lower order terms in the asymptotic 
formula for the mean square along
 $[T,2T]$ of a Dirichlet polynomial of arbitrary length 
with divisor functions as coefficients.
\end{abstract}

\maketitle

\section{Introduction}
This paper is part V of a sequence devoted to understanding how to conjecture all of 
the integral moments of the Riemann zeta-function from a number theoretic perspective. The method is to approximate
$\zeta(s)^k$ by a long Dirichlet polynomial and then to compute the mean square of the Dirichlet polynomial (c.f.~[GG]). 
There are many off-diagonal terms and it is the care of these that is the concern of these papers. In
particular it is necessary to treat the off-diagonal terms by a method invented by Bogomolny and Keating [BK1, BK2]. 
Our perspective on
this method is that it is most properly viewed as a multi-dimensional Hardy-Littlewood circle method. 

In previous papers [CK1, CK3] we have developed a general method to calculate what were called type-I off-diagonal contributions in [BK1, BK2]; these are the off-diagonal terms usually considered in number-theoretic computations, e.g.~in [GG]. In parts II [CK2] and IV [CK4] we considered the simplest of the type-II off-diagonal terms (in the terminology of [BK1, BK2]).  These, somewhat unexpectedly, give a significant contribution in certain cases.  They have not previously been analysed systematically.  Our purpose here is to develop a general method for computing  all type-II off-diagonal terms.  
 
The  formula we obtain is in complete agreement with all of the main terms  
predicted by the recipe of [CFKRS] (and in particular, with the leading order term conjectured in [KS]). 
 
\section{Shifted moments}
 
Let $A $ and $B $ be two sets of cardinality $k$ containing ``shifts'' which may be thought of as  parameters of size $\ll \frac{1}{\log T}$
where $T$ is the basic large parameter near where we want to know the average of a product of $\zeta$-functions.  Let
$$D_A(s):=\prod_{\alpha\in A} \zeta(s+\alpha) =\sum_{n=1}^\infty \frac{\tau_A(n)}{n^s};$$
this implicitly defines the arithmetic functions $\tau_A(n)$.  
A basic question (the moment problem) is to evaluate 
\begin{eqnarray*}
\label{eqn:moment} 
I^\psi_{A,B}(T)=\int_0^\infty \psi\left(\frac t T \right) D_A(s)D_B(1-s) ~dt
\end{eqnarray*}
where $\psi$ is a smooth function with compact support, say $\psi \in C^\infty[1,2]$, and $s=1/2+it$.
The technique we are developing in this sequence of papers is to approach the  moment problem  through long Dirichlet polynomials. To this end we let
$$D_A(s;X) =\sum_{n\le X} \frac{\tau_A(n)}{n^s}$$
be the truncated Dirichlet series for $D_A(s)$. 
By Perron's formula for $\Re s > 1/4$ we have
\begin{eqnarray*}D_A(s;X)&=& \frac{1}{2\pi i}\int_{(1)} D_A(s+z) \frac{X^z}{z} ~dz\\
&=& D_A(s)+ R_A(s;X) +\frac{1}{2\pi i}\int_{(-\epsilon)} D_A(s+z) \frac{X^z}{z} ~dz
\end{eqnarray*}
where 
\begin{eqnarray*}
R_A(s;X)&=&\sum_{\alpha\in A} \operatornamewithlimits{Res}_{z=1-s-\alpha}
D_A(s+z) \frac{X^z}{z}  \\
&=& \sum_{\alpha\in A} D_{A\setminus\{\alpha\}}(1-\alpha) \frac{X^{1-s-\alpha}}{(1-s-\alpha)}.
\end{eqnarray*}
 In this sequence of papers  we consider  
\begin{eqnarray*}
I^\psi_{A,B}(T;X):=\int_0^\infty \psi\left(\frac t T\right) \big(D_{A}(s;X)-R_A(s;X)\big)\big(D_{B}(1-s;X) -R_B(1-s;X)\big)~dt \end{eqnarray*}
for various ranges of $X$. The expectation is that if $X>T^k$ then
this will be asymptotically  equal to $I_{A,B}^\psi(T)$.\begin{footnote} 
{In the earlier papers we did not include 
the terms $R_A$ and $R_B$ because their contributions were negligible since $X$ was not  large.  In general these terms do not create extra difficulties and so they will  be ignored here as well.}
\end{footnote}

We calculate this average in two different (conjectural) ways: the first is via the recipe and the second is via the delta method applied to the 
correlations of shifted divisor functions. We show that these two methods produce identical detailed main terms.

To use the recipe of [CFKRS] to conjecture a formula for $I_{A,B}^\psi(T;X)$, we start with  
$$ D_{A}(s;X)-R_A(s;X)=\frac{1}{2\pi i}
\int_{(\epsilon)} \frac{X^{w}}{w}D_{A_w}(s)~dw
$$
so that 
$$I^\psi_{A,B}(T;X)= \frac{1}{(2\pi i)^2}
\iint_{(\epsilon),(\epsilon)}\frac{X^{z+w}}{zw} I^\psi_{A_{w},B_{z}}(T)~dw~dz
$$
where 
$$A_w:=\{\alpha+w:\alpha\in A\},$$
i.e. the set $A$ but with all its elements shifted by $w$. 
From the recipe [CFKRS] we expect that
$$ I^\psi_{A,B}(T)=T\int_0^\infty \psi(t)\sum_{U\subset A, V\subset B\atop |U|=|V|}
 \left(\frac{tT}{2\pi}\right)^{-\sum_{\hat{\alpha}\in U\atop
\hat{\beta}\in V}(\hat{\alpha}+\hat{\beta}) }\mathcal B(A-U+V^-,B-V+U^-,1)~dt+o(T)
$$
where 
$$\mathcal B(A,B,s):=\sum_{n=1}^\infty \frac{\tau_A(n)\tau_B(n)}{n^s}.$$
 We have used an unconventional notation here; 
 by $A-U+V^-$ we mean the following: start with the set $A$ and remove the elements of $U$ and then include the negatives of 
the elements of $V$. We think of the process as ``swapping" equal numbers of elements between  $A$ and $B$; when elements are removed from $A$
and put into $B$ they first get multiplied by $-1$. We keep track of these swaps with our equal-sized subsets $U$ and $V$  of $A$ and $B$;
and when we refer to the ``number of swaps'' in a term we mean the cardinality $|U|$ of $U$ (or, since they are of equal size, of $V$). 
We insert this conjecture  and expect that
\begin{eqnarray} \label{eqn:basicII}
 I^\psi_{A,B}(T;X) &=&  T\int_0^\infty \psi(t)\frac{1}{(2\pi i)^2}
\iint_{(\epsilon),(\epsilon)}\frac{X^{z+w}}{zw}\sum_{U\subset A, V\subset B\atop |U|=|V|}
 \left(\frac{tT}{2\pi}\right)^{-\sum_{\hat{\alpha}\in U\atop
\hat{\beta}\in V}(\hat{\alpha}+w+\hat{\beta}+z) }\\
&&\qquad \qquad \times \mathcal B(A_w-U_w+V^-_z,B_z-V_z+U^-_w,1)~dw~dz~dt +o(T).
\nonumber
\end{eqnarray}
We have done a little simplification here: instead of writing $U\subset A_w$ we have written
$U\subset A$ and changed the exponent of $(tT/2\pi)$ accordingly.

Notice that there is a factor $(X/T^{|U|})^{w+z}$ in the previous equation. As mentioned above we refer to  $|U|$ as the number of ``swaps" in the recipe, and now we see
more clearly the role it plays; in the terms above for which  $X<T^{|U|}$   we move the path of 
integration in $w$ or $z$ to $+\infty$ so that the factor  $(X/T^{|U|})^{w+z}\to 0$ and the contribution of such a term is 0.
Thus, the size of $X$ determines how many ``swaps'' we must keep track of. 
To account for this we introduce a parameter $\ell$ defined by
$$T^\ell \le X  < T^{\ell+1}.$$
Then the above may be rewritten as 
\begin{eqnarray} \label{eqn:basicIIa}
 I^\psi_{A,B}(T;X) &=&  T\int_0^\infty \psi(t)\frac{1}{(2\pi i)^2}
\iint_{(\epsilon),(\epsilon)}\frac{X^{z+w}}{zw}\sum_{U\subset A, V\subset B\atop |U|=|V|\le \ell}
 \left(\frac{tT}{2\pi}\right)^{-\sum_{\hat{\alpha}\in U\atop
\hat{\beta}\in V}(\hat{\alpha}+w+\hat{\beta}+z) }\\
&&\qquad \qquad \times \mathcal B(A_w-U_w+V^-_z,B_z-V_z+U^-_w,1)~dw~dz~dt +o(T)
\nonumber
\end{eqnarray}
where we have restricted the sum to at most $\ell$ swaps.

Now we turn to the second approach via divisor correlations with the goal of obtaining this 
formula in a completely different way. 
In [CK1] and [CK3] we accomplished this in the situations where there were 0 or 1 swaps
(i.e. when $X<T^2$).   In [CK2] we considered two swaps but in a special case. 
In [CK4]  we looked at the general case of two swaps. In this paper, which is the final paper of the sequence,  we look at the general case with any number of swaps.   

As an extension  of the ideas in these papers, we have also begun to explore the analogous calculations for averages of ratios of the zeta function, specifically in the context of zero correlations [CK5, CK6].

The second method to obtain a conjecture for $I^\psi_{A,B}(T)$  will involve an intricate study of  convolutions of shifted divisor problems and  will occupy the rest of this paper. We begin that calculation by integrating  term-by-term to obtain
 \begin{eqnarray} \label{eqn:basicIII}   
\frac 1 T  \int_0^\infty \psi\left(\frac t T\right) D_{A}(s;X)D_{B}(1-s;X) ~dt
&=& \sum_{m,n\le X }\frac{\tau_A(m)\tau_B(n)\hat{\psi}\left(\frac{T}{2\pi}\log \frac m n \right)}{\sqrt{mn}} 
\\&=:& \mathcal O_{A;B}(T;X)   \nonumber \end{eqnarray}
 where $\hat{\psi}(x)=\int_{-\infty}^\infty \psi(t) e(-xt) ~dt ~ (=\int_{0}^\infty \psi(t) e(-xt) ~dt$
because of the support of $\psi$).
 Now  let us assume that $\ell\le k$ where $\ell $ is defined above.
We partition  $A$ and $B$ into $\ell$  non-empty sets $A=A_1\cup A_2\dots \cup A_\ell$ and 
$B=B_1\cup B_2\dots \cup B_\ell$. Then $\tau_A$ and $\tau_B$ are convolutions: 
$\tau_A=\tau_{A_1}*\tau_{A_2}*\dots *\tau_{A_\ell}$ and 
$\tau_B=\tau_{B_1}*\tau_{B_2}*\dots *\tau_{B_\ell}$.
For any such partition, the right hand side of (\ref{eqn:basicIII}) is equal to 
$$\mathcal O_{A_1,\dots,A_\ell;B_1,\dots,B_\ell}(T;X):=\sum_{m_1m_2\dots m_\ell \le X\atop
n_1n_2 \dots n_\ell\le X} \frac{\tau_{A_1}(m_1)\dots\tau_{A_\ell}(m_\ell)
 \tau_{B_1}(n_1)\dots \tau_{B_\ell}(n_\ell)}
 {\sqrt{m_1\dots m_\ell n_1\dots n_\ell}} \hat \psi\left(\frac{T}{2\pi}\log\frac{m_1\dots m_\ell}{n_1\dots n_\ell}\right).$$
 In other words $\mathcal O_{A;B}(T;X)=\mathcal O_{A_1,\dots,A_\ell;B_1,\dots,B_\ell}(T;X)$
 as long as $A$ and $B$ are  the disjoint unions of the $A_i$ and $B_j$.
Now we want to define a refinement of this sum. We impose a pairing  $A_j$ with $B_j$ 
and analyze this sum according to rational approximations to $m_j/n_j$.
In this way, the ordering of the sets $A_i, B_j$ now matters.  The eventual evaluation of $\mathcal O_{A,B}(T;X)$ will involve a sum of these pairings, which we describe in detail in the next section.

\section{Type II convolution sums}
There are various ways to decompose $A$ and $B$ and various ways to ``pair'' divisor functions $\tau_{A_i}$ and $\tau_{B_j}$ in preparation for the delta method.

More importantly, however, it turns out that there are various stratifications that also present themselves; basically one for each rational ``direction.''  If we ignore these 
then a simple application of the expected main terms from the  delta-method analysis will lead us to the wrong main terms.

At first sight it seems that when we do this we are counting the same terms repeatedly. However, we believe that our situation is an example of Manin's stratified subvarieties wherein counting solutions to  high dimensional diophantine equations often involves identifying a collection of subvarieties 
on each of which the solutions are counted separately (by the delta method for example). The point is that the main terms of the delta method do not always count all of the solutions.  This phenomenon was first identified in [FMT]; see, for example, [B] and [LT] for reviews of the subject.

Given $A_1, \dots ,A_\ell$ and $B_1,\dots,B_\ell$, the number of ways to pair each $A_n$ with a $B_m$ so that all are paired off is $\ell!$. 
Let us consider the pairing of $A_j$ with $B_j$. 
Now we  think of 
 $m_j/n_j$ as being approximated  by 
a rational number  $M_j/N_j$ with a small denominator  for each of $j=1,2,\dots ,\ell$ 
where $(M_j,N_j)=1$. In this way we get  subvarieties indexed by the   rational directions $M_j/N_j$ with 
$1\le j\le \ell$.   
We  will use all  directions $M_j/N_j$ subject to the natural conditions $(M_j,N_j)=1$ and 
\begin{eqnarray} \label{eqn:condition} M_1\dots M_\ell = N_1\dots N_\ell.
\end{eqnarray}
 We  sum over all of the terms with $m_j/n_j$ 
close to $M_j/N_j$.  We  introduce variables $h_j$
where, for a given $m_j, n_j,  M_j$ and $N_j$, we 
  define 
$$h_j := m_j N_j-n_j M_j.$$
 The rapid decay of $\hat{\psi}$ governs the 
ranges of all of the variables; see  below.

 We have
$$ n_1\dots n_\ell M_1\dots M_\ell=
 (m_1N_1-h_1)\dots  (m_\ell N_\ell-h_\ell) $$
so that for $M_1\dots M_\ell =N_1\dots N_\ell$ we have 
$$
\frac{ n_1\dots  n_\ell}{m_1\dots m_\ell}=
\big(1-\frac{h_1}{m_1N_1}\big)\dots\big(1 -\frac{h_\ell}{m_\ell N_\ell}\big)  
$$
and 
$$\log\frac{n_1\dots n_\ell}{m_1\dots m_\ell}=-\frac{h_1}{m_1N_1}-\dots -\frac{h_\ell}{m_\ell N_\ell}
 +O\big(\frac{h_ih_j}{m_1\dots m_\ell N_1 \dots N_\ell}\big).
$$
The error term is negligible so we 
can arrange the sum as 
\begin{eqnarray*} \label{eqn:start}
\sum_{  (M_j,N_j)=1\atop 
M_1\dots M_\ell =N_1\dots N_\ell}
\sum_{h_1,\dots h_\ell }
\sum_{{m_1\dots m_\ell\le X }\atop {(*_1), \dots (*_\ell) }}
\frac{\tau_{A_1}(m_1)\dots \tau_{A_\ell}(m_\ell) \tau_{B_1}(n_1)\dots \tau_{B_\ell}(n_\ell)}
{\sqrt{m_1\dots m_\ell n_1\dots n_\ell}}
\hat \psi \left(\frac{Th_1}{2\pi m_1N_1}+\dots +\frac{Th_\ell}{2\pi m_\ell N_\ell} \right)
\end{eqnarray*}
 where 
$$ (*_j): m_jN_j-n_jM_j=h_j. 
$$
We can replace $n_j$ in the denominator by $ \frac{m_jN_j}{M_j}$. Thus we are led to define
\begin{eqnarray}\nonumber \mathcal{O}_{A_1,\dots,A_\ell;B_1,\dots,B_\ell}^\dagger(T;X):=  && 
 \sum_{(M_j,N_j)=1\atop 
M_1\dots M_\ell =N_1\dots N_\ell}  
\sum_{h_1,\dots h_\ell }
\sum_{{m_1\dots m_\ell\le X }\atop {(*_1), \dots (*_\ell) }}
\frac{\tau_{A_1}(m_1)\dots \tau_{A_\ell}(m_\ell) \tau_{B_1}(n_1)\dots \tau_{B_\ell}(n_\ell)}
{m_1\dots m_\ell } \\&& \qquad  \qquad \times 
\hat \psi \left(\frac{Th_1}{2\pi m_1N_1}+\dots +\frac{Th_\ell}{2\pi m_\ell N_\ell} \right) .
\end{eqnarray}

Also, we define
\begin{eqnarray}  \mathcal{O}_{\ell}(T;X):= \label{eqn:start}&& 
\frac{1}{w_\ell}\sum_{A=A_1\cup\dots \cup A_\ell
\atop B=B_1\cup\dots \cup B_\ell}
 \mathcal{O}_{A_1,\dots,A_\ell;B_1,\dots,B_\ell}^\dagger(T;X),
 \end{eqnarray}
 where the 
 weight factor 
 $$w_\ell=\ell!^2 \ell^{2k-2\ell}$$
 in   $\mathcal O_\ell(T;X)$ will be explained in a later section.
Note that $\ell$ is defined in terms of  $T$ and $X$, so its inclusion in the notation is redundant. 
 Now we can state 
 \begin{conjecture} Suppose that $k\le \ell+1$ 
 and $T^\ell \le X < T^{\ell+1}$. Then for some $\delta>0$, 
 $$ I^\psi_{A,B}(T; X)= T\mathcal O_\ell(T;X) +O(T^{1-\delta}).$$
 \end{conjecture}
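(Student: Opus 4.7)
The plan is to establish $T\mathcal O_\ell(T;X)=I^\psi_{A,B}(T;X)+O(T^{1-\delta})$ by computing $\mathcal O_\ell(T;X)$ via the delta method and matching the result term-by-term with the recipe prediction (\ref{eqn:basicIIa}). The argument divides into three stages: evaluation of each stratum contribution to $\mathcal O^\dagger_{A_1,\dots,A_\ell;B_1,\dots,B_\ell}(T;X)$, summation over ordered partitions normalised by $w_\ell^{-1}$, and identification with the recipe.

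First, I would fix ordered partitions and a stratum $(M_j,N_j)$ with $(M_j,N_j)=1$ and $\prod M_j=\prod N_j$. The condition $(*_j)$ couples $m_j$ to $n_j$ but the different indices are linked only through $\prod m_j\le X$ and the argument of $\hat\psi$, so for each $j$ the inner sum is a shifted convolution whose main term is supplied by the delta-method analyses of [CK1]--[CK4]. When $h_j=0$ this yields a polynomial in $\log m_j$ of degree $|A_j|+|B_j|-1$; when $h_j\neq0$ it yields an archimedean Mellin--Barnes factor multiplied by a local arithmetic factor depending on $(M_j,N_j,h_j)$. I would then Poisson-sum in each $h_j$, combine with the Mellin transform of $\hat\psi$, and perform the outer sum over $(M_j,N_j)$ using the standard Dirichlet-series identity for Ramanujan-type sums with coprimality and product constraints. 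The constraint $\prod M_j=\prod N_j$ is handled by an additional contour integration, and the resulting arithmetic piece should factor into the zeta-product encoded in $\mathcal B(\cdot,\cdot,1)$.

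Second, I would expand the archimedean product by residues, recording for each coordinate $j$ a residue choice corresponding to subsets $U_j\subset A_j$, $V_j\subset B_j$ with $|U_j|=|V_j|$. Passing to the aggregates $U=\bigsqcup_j U_j\subset A$ and $V=\bigsqcup_j V_j\subset B$, each $(U,V)$ with $|U|=|V|\le\ell$ arises from the ordered-partition sum with multiplicity $w_\ell=\ell!^2\ell^{2k-2\ell}$: the factor $\ell!^2$ accounts for orderings of the partitions of $A$ and $B$, while $\ell^{2k-2\ell}$ counts the placements of the $k-|U|$ non-swap elements of $A$, and similarly of $B$, among the $\ell$ blocks. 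Cancelling $w_\ell$ against $w_\ell^{-1}$ reduces the expression precisely to the recipe's sum over $U,V$ with $|U|=|V|\le\ell$, and the integrand matches (\ref{eqn:basicIIa}) coordinate-by-coordinate, including the $(X/T^{|U|})^{w+z}$ factor that encodes the constraint $|U|\le\ell$.

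The principal obstacle lies in the first stage: the Manin-style stratification demands showing that the delta-method main terms on different rational directions $(M_j/N_j)$ neither double-count common solutions nor leave gaps, and that the off-diagonal contributions assemble coherently into the recipe's $2\ell$-fold contour integral rather than merely covering it with overlapping redundancy. This is considerably more delicate than in the type-I case treated in [CK1, CK3], because the $\ell$-fold stratification allows partial coincidences between strata that must be shown to cancel. The power-saving error $O(T^{1-\delta})$ additionally requires quantitative shifted-divisor estimates with explicit dependence on $M_j, N_j, h_j$ so that truncating the $h_j$-sums and deforming contours costs only $T^{-\delta}$.
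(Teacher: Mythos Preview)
The statement is a \emph{conjecture}, and the paper does not prove it; it offers evidence by carrying out the delta-method evaluation \emph{conjecturally} and then proving rigorously (Theorems~1--4) that the resulting expression coincides with the recipe. Your proposal reads as a proof plan, but the obstacle you flag at the end---power-saving control of the shifted-divisor error terms uniformly in $M_j,N_j,h_j$---is precisely the reason this remains conjectural for general $k$. Nothing in your outline supplies that control, so what you have is at best a heuristic matching of main terms, which is exactly the paper's position.

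Your description of the main-term structure also differs from the paper in a way that matters. You say the archimedean residues yield subsets $U_j\subset A_j$, $V_j\subset B_j$ with $|U_j|=|V_j|$, so that swaps of all sizes $\le\ell$ arise from residue choices. In the paper each index $j$ with $h_j\neq 0$ contributes exactly one pair $(\alpha_j,\beta_j)\in A_j\times B_j$ (the poles of $D_{A_j}$ and $D_{B_j}$ are simple at each shift), so the case $h_1\cdots h_\ell\neq 0$ produces only the $\ell$-swap terms. The $\ell'$-swap terms for $\ell'<\ell$ come instead from the configurations where $\ell-\ell'$ of the $h_j$ vanish (Section~8 and Theorems~3--4), with the corresponding factors becoming diagonal sums $\sum_\kappa \tau_{A_i}(\kappa M_i)\tau_{B_i}(\kappa N_i)(\kappa M_i)^{-1-s}$. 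There is also no Poisson step in $h_j$: one sums $h_j^{-(s+\alpha_j+\beta_j)}$ directly to get $\zeta(s+\alpha_j+\beta_j)$, then pairs this with the $\chi$-factor coming from the $v_j$-integral (Lemma~1) to obtain $\zeta(1-s-\alpha_j-\beta_j)$. Finally, your multiplicity count should read $\ell^{k-\ell}$ placements of the $k-\ell$ elements of $A$ not in the swap set (and likewise for $B$), independent of $|U|$; the same count shows up as the number of $(*)$-systems attached to a fixed $(m,n)$, and that equality of multiplicities is what justifies $w_\ell$.
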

 
One way to view this paper is that it  gives evidence for this conjecture. In particular, in the next few sections we will  conjecturally understand 
$ \mathcal{O}_{A_1,\dots,A_\ell;B_1,\dots,B_\ell}^\dagger(T;X)$ 
by replacing the shifted divisor sums by what the delta-method leads us to expect for them. 
Then we evaluate the result and prove the rigorous theorem that our evaluation is precisely the quantity on the right-hand side of (\ref{eqn:basicII}).

\section{The case where $h_1\dots h_\ell\ne 0$}
Let us first look at the situation where none of the $h_j$ is 0.
The idea is to evaluate $ \mathcal{O}_{A_1,\dots,A_\ell;B_1,\dots,B_\ell}^\dagger(T;X)$  by replacing the $m_j$ by real variables $u_j$ while the $M_j, N_j$ and $h_j$ remain   integer-valued variables (and the $n_j$ are determined by the equations $*_j$). 

To do this 
we will replace the convolution sums by their averages, i.e.
\begin{eqnarray}\label{eqn:delta} 
\langle \tau_{A}(m)\tau_{B}(n)\rangle^{(*)}_{m= u}
\sim \frac{1}{M}\sum_{q=1}^\infty r_q(h)\langle \tau_A(m)e(mN/q)\rangle_{m= u}
\langle \tau_B(n)e(nM/q)\rangle_{n= \frac{uN}{M}}
\end{eqnarray}
where $r_q(h)$ is the Ramanujan sum (usually denoted $c_q(h)$) and 
$$\langle \tau_A(m)e(mN/q)\rangle_{m= u}
=\frac{1}{2\pi i}\int_{|w-1|=\epsilon}D_A(w,e\big(\frac{N}{q}\big))u^{w-1}~dw$$
 where 
 $$ D_A(w,e\big(\frac{N}{q}\big))= \sum_{n=1}^\infty\frac{\tau_A(n)e(nN/q)}{n^s}.$$
(In the above few lines we have  replaced  a sum 
$\sum_{n\le x} a_n f(n)$ by an integral $\int_1^x f(u) \langle a_n\rangle _{n=u} ~du $
where (in the handy physics notation) $\langle a_n\rangle_{n=u}$ denotes the average of $a_n$ when $n=u$ (the instantaneous rate of change of a good approximation to $\sum_{n\le u} a_n $ with respect to $u$). In our context
this may be expressed using $A(s)=\sum_{n=1}^\infty a_n n^{-s}$ and defining 
$ \langle a_n\rangle _{n=u}= \operatornamewithlimits{Res}_{|s-1|<\epsilon}u^{s-1}A(s)$
where we sum   the residues at all of the poles of $A(s)$ near $s=1$. )

Thus, we believe that 
\begin{eqnarray*}&&\sum_{M_j,N_j,(M_j,N_j)=1\atop N_1\dots N_\ell=M_1\dots M_\ell}\sum_{h_1,\dots, h_\ell\atop h_1 \dots h_\ell\ne 0 }
 \int_{u_1\dots u_\ell\le X  }\langle \tau_{A_1}(m_1)\tau_{B_1}(n_1)\rangle^{(*_1)}_{m_1= u_1}
\dots 
\langle \tau_{A_\ell}(m_\ell)\tau_{B_\ell}(n_\ell)\rangle^{(*_\ell)}_{m_\ell= u_\ell} \\&&
\qquad \qquad \qquad \qquad  \qquad\times 
\hat \psi \left(\frac{Th_1}{2\pi u_1N_1}+\dots\frac{Th_\ell}{2\pi u_\ell N_\ell} \right)~\frac{du_1}{u_1}\dots ~\frac{du_\ell}
{u_\ell}.
\end{eqnarray*}
  is, up to a power savings, equal to
\begin{eqnarray*}&&
\sum_{M_j,N_j,(M_j,N_j)=1\atop N_1\dots N_\ell=M_1\dots M_\ell}
\frac{1}{M_1\dots M_\ell} \sum_{q_j, h_j}r_{q_1}(h_1)\dots r_{q_\ell}(h_\ell)
  \int_{T^\ell\le u_1\dots u_\ell\le X }\\&&\qquad 
  \times \prod_{j=1}^\ell
 \frac{1}{2\pi i }\int_{|w_j-1|=\epsilon} 
 D_{A_j}(w_j,e\big(\frac{N_j}{q_j}\big))u_j^{w_j-1}dw_j\prod_{k=1}^\ell 
  \frac{1}{2\pi i }\int_{|z_k-1|=\epsilon} 
 D_{B_k}(z_k,e\big(\frac{M_k}{q_k}\big))\big(\frac {u_k N_k}{M_k}\big)^{z_k-1}dz_k
  \\&&\qquad \qquad \qquad \times
\hat{\psi}\left( T\bigg(\frac{h_1}{2\pi u_1N_1}+\dots +\frac{h_\ell }{2\pi u_\ell N_\ell}\bigg)\right) 
~\frac{du_1}{u_1}\dots \frac{du_\ell}{u_\ell}.
\end{eqnarray*}
 
 To further analyze this quantity, 
we make the changes of variable $v_j=\frac{T|h_j|}{2\pi u_jN_j}$  
  and bring the sums over the $h_j$ 
 to the inside; $u_1\dots u_\ell <X$ implies that 
$$ |h_1 \dots h_\ell| < \frac{(2\pi)^\ell Xv_1\dots v_\ell N_1\dots N_\ell }{T^\ell}.$$ 
We detect this condition using Perron's formula in  an integral over $s$. 
Then the above is 
\begin{eqnarray*}&& \int_{0<  v_1,\dots, v_\ell < \infty }
\sum_{M_j,N_j,(M_j,N_j)=1\atop N_1\dots N_\ell=M_1\dots M_\ell}
\frac{1}{M_1\dots M_\ell} \frac{1}{2\pi i}\int_{(2)}\sum_{q_j, h_j}\frac{r_{q_1}(h_1)\dots r_{q_\ell}(h_\ell)}
{|h_1\dots h_\ell|^s}
 \\&&\qquad 
  \times \prod_{j=1}^\ell
 \frac{1}{2\pi i }\int_{|w_j-1|=\epsilon} 
 D_{A_j}(w_j,e\big(\frac{N_j}{q_j}\big))\big(\frac{T|h_j|}{2\pi v_jN_j}\big)^{w_j-1}dw_j
 \\&&\qquad \qquad \prod_{k=1}^\ell 
  \frac{1}{2\pi i }\int_{|z_k-1|=\epsilon} 
 D_{B_k}(z_k,e\big(\frac{M_k}{q_k}\big))\big(\frac{T|h_k|}{2\pi v_kM_k}\big)^{z_k-1}dz_k
  \\&&\qquad \qquad \qquad \times \left(\frac{(2\pi)^\ell Xv_1\dots v_\ell N_1\dots N_\ell }{T^\ell}\right)^s
\hat{\psi}(\epsilon_1v_1+\dots +\epsilon_\ell v_\ell)
~\frac{dv_1}{v_1}\dots \frac{dv_\ell}{v_\ell}~\frac{ds}{s}
\end{eqnarray*}
where $\epsilon_j=\mbox{sgn}(h_j)$.
We simplify this a bit. We combine the middle two lines into a single product over $j$ and gather together all of the like variables (note that the sums over $h_j$ below are now restricted to the positive integers) :
\begin{eqnarray*}&&
\mbox{LHS}_\ell:= \frac{1}{2\pi i} \int_{(2)} X^s \big(\frac{T }{2\pi  }\big)^{-\ell s}
\sum_{{(M_1,N_1)=\dots =(M_\ell,N_\ell)=1\atop N_1\dots N_\ell=M_1\dots M_\ell}\atop \epsilon_j\in \{-1,+1\}} 
 \int_{0<  v_1,\dots, v_\ell < \infty }\hat{\psi}(\epsilon_1v_1+\dots +\epsilon_\ell v_\ell)  \\&&\qquad  \prod_{j=1}^\ell
\bigg[\frac{1}{(2\pi i)^2 }\iint_{ |w_j-1|=\epsilon\atop |z_j-1|=\epsilon} 
 M_j^{-z_j} N_j^{s+1-w_j} \sum_{h_j, q_j}\frac{r_{q_j}(h_j) }
{h_j ^{s+2-w_j-z_j}} v_j^{s+1-w_j-z_j}
 \\&&\qquad 
\qquad 
  D_{A_j}(w_j,e\big(\frac{N_j}{q_j}\big)) 
   D_{B_j}(z_j,e\big(\frac{M_j}{q_j}\big))\big(\frac{T }{2\pi  }\big)^{w_j+z_j-2}
 dw_j dz_j
~dv_j \bigg]~\frac{ds}{s}
\end{eqnarray*}

At this point we can rigorously identify $\mbox{LHS}_\ell$  with the terms on the right of  (\ref{eqn:basicII}),
through our key identity:
 \begin{theorem}
 \begin{eqnarray*}
 && \mbox{LHS}_\ell= \frac{1}{2\pi i} \int_{(2)} X^s  \int_0^\infty \psi(t)
 \sum_{U(\ell)\subset A\atop V(\ell) \subset B}
 \bigg(\frac {Tt}{2\pi}\bigg)^{-\sum_{\hat \alpha\in U(\ell)\atop \hat \beta \in V(\ell)}(\hat\alpha+\hat \beta+s)}
   \\&&\qquad \qquad 
 \times \mathcal B(A_s-U(\ell)_s+V(\ell)^-,B-V(\ell)+{U(\ell)_s}^-,1) ~dt 
 ~\frac{ds}{s}
\end{eqnarray*}
where $U(\ell)$ denotes a set of cardinality $\ell$ with precisely one element from each of $A_1,\dots ,A_\ell$ and similarly 
$V(\ell)$ denotes a set of cardinality $\ell$ with precisely one element from each of $B_1,\dots ,B_\ell$ .
\end{theorem}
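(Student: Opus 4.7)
My plan is to reduce $\mathrm{LHS}_\ell$ to the claimed expression by (i) collapsing the arithmetic inner sum, (ii) evaluating the $v_j$--integrals via a Mellin/Fourier identity for $\widehat\psi$, and (iii) picking up the correct residues in $w_j,z_j$ so as to expose the swap structure encoded by $U(\ell),V(\ell)$. The shape of $\mathrm{LHS}_\ell$ is a product over $j$ of independent pieces (each carrying its own $M_j,N_j,h_j,q_j,w_j,z_j$), tied together only by the $\psi$--integral and by the single global constraint $M_1\cdots M_\ell=N_1\cdots N_\ell$; this suggests factorising as much as possible before reimposing the constraint.

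First I would deal with the arithmetic kernel. For each $j$ one has the classical additive--twist identity
\[
D_{A_j}\!\bigl(w_j,e(N_j/q_j)\bigr)=\sum_{d\mid q_j}\frac{c_{A_j}(w_j;d,N_j)}{d^{w_j}}\,\zeta\!\hbox{--type factors},
\]
and the Ramanujan--Perron identity $\sum_{q\geq1}r_q(h)/q^{w}=\sigma_{1-w}(h)/\zeta(w)$. Using these, together with multiplicativity in $q_j$, the inner sum over $h_j,q_j$ with weight $r_{q_j}(h_j)/h_j^{s+2-w_j-z_j}$ collapses to an Euler product whose local factors at each prime $p$ involve only $p^{-w_j},p^{-z_j}$ (and the residues $N_j,M_j\bmod p$). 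The coprimality $(M_j,N_j)=1$ and the global relation $\prod M_j=\prod N_j$ can be detected prime by prime, turning the outer sum over $(M_j,N_j)$ into a product over primes of explicit local factors. At each prime these local factors are precisely those appearing in the Dirichlet series
\[
\mathcal B\bigl(A_s-U(\ell)_s+V(\ell)^-,\,B-V(\ell)+U(\ell)_s^-,\,1\bigr),
\]
once one keeps track of the shifts by $s$ contributed by $(X/T^\ell)^{\,\cdot\,}$ on the outside. This is the step I expect to be the most delicate: the matching of local Euler factors requires careful bookkeeping of how each choice of $(\alpha_j,\beta_j)\in A_j\times B_j$ interacts with the $\gcd$--structure in the Ramanujan expansions and with the constraint $\prod M_j=\prod N_j$, and this is where the prior papers in the series (especially [CK3,CK4]) provide the local template.

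Next I would handle the $v_j$--integrals. Writing $v_j^{s+1-w_j-z_j}=v_j^{s-1}\cdot v_j^{2-w_j-z_j}$, summing over signs $\epsilon_j\in\{\pm1\}$, and interchanging with $\widehat\psi(\epsilon_1v_1+\dots+\epsilon_\ell v_\ell)=\int_0^\infty\psi(t)e(-t(\epsilon_1v_1+\dots+\epsilon_\ell v_\ell))\,dt$, the $v_j$--integrals separate. Each becomes a one--dimensional Mellin--Fourier integral that evaluates (by a standard contour shift plus the functional equation of the $\Gamma$--function) to a gamma factor times a power of $2\pi t$. Putting these together, the $v$--integral contributes exactly
\[
\prod_{j=1}^\ell\!\Bigl(\tfrac{2\pi t}{T}\Bigr)^{\!w_j+z_j-2-s}\cdot(\text{entire gamma product}),
\]
which, combined with the explicit $T$-- and $X$--powers already present, produces a factor $(Tt/2\pi)^{-\sum_j(w_j+z_j)+2\ell+\ell s}$ and an overall $X^s$.

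Finally I would shift each pair of contours $|w_j-1|=\epsilon$, $|z_j-1|=\epsilon$ and collect residues. The only poles inside each little circle are at $w_j=1-\alpha$, $\alpha\in A_j$, and at $z_j=1-\beta$, $\beta\in B_j$, arising from the zeta--factors in $D_{A_j}$ and $D_{B_j}$. Choosing one such $\alpha=\widehat\alpha_j$ and $\beta=\widehat\beta_j$ from each $A_j,B_j$ picks out exactly the sets $U(\ell)=\{\widehat\alpha_1,\dots,\widehat\alpha_\ell\}$ and $V(\ell)=\{\widehat\beta_1,\dots,\widehat\beta_\ell\}$ named in the statement, with the surviving Dirichlet series being the shifted divisor series whose Euler product is $\mathcal B(A_s-U(\ell)_s+V(\ell)^-,B-V(\ell)+U(\ell)_s^-,1)$. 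The exponent $\sum_j(w_j+z_j)=\sum_j(2-\widehat\alpha_j-\widehat\beta_j)$ at the residues turns the $(Tt/2\pi)$--factor into the claimed $(Tt/2\pi)^{-\sum_{\widehat\alpha\in U(\ell),\widehat\beta\in V(\ell)}(\widehat\alpha+\widehat\beta+s)}$, and assembling the pieces yields the displayed formula. The hard part, as noted, is the prime--by--prime matching of Euler factors against $\mathcal B$; once that identification is in hand the residue bookkeeping and the $v_j$--integration are essentially mechanical consequences of the previous papers' formalism.
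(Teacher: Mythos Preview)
Your three-step outline---collapse the arithmetic, evaluate the $v_j$-integrals to produce $\chi$-factors and a $t$-integral, then collect residues at $w_j=1-\alpha_j$, $z_j=1-\beta_j$---matches the paper's structure (Sections 5--7) closely, and your identification of the local Euler factor matching as the crux is correct. However, there is a real gap: you treat that matching as essentially inherited from [CK3, CK4], writing that those papers ``provide the local template.'' They do not. The earlier papers handle only $\ell\le 2$, and the general-$\ell$ local identity is precisely the new content of this paper. The paper states it as a separate theorem (Theorem~2): with $X=1/p$ and the shorthand $A(n)=\tau_A(p^n)$, one must show that a certain sum $\mathcal Q$ over $\ell$-tuples $(M_j,N_j)$ with $\min(M_j,N_j)=0$ and $\sum M_j=\sum N_j$, weighted by explicit local factors $\Sigma_{A_j,B_j,\alpha_j,\beta_j}(M_j,N_j)$, equals $\prod_j(1-X^{1-\alpha_j-\beta_j})\,\mathcal C(A\cup\{-\beta_j\},B\cup\{-\alpha_j\})$.

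The paper's proof of this identity is genuinely combinatorial and does not fall out of the Ramanujan--Perron manipulations you sketch. It proceeds via three lemmas: first, a telescoping computation reduces each $\Sigma(M,0)$ and $\Sigma(0,N)$ to a signed sum of three shifted-convolution pieces of the form $\sum_K A'(K+M)B'(K+N)X^K$ with $A'\in\{A,A\cup\{-\beta\}\}$, $B'\in\{B,B\cup\{-\alpha\}\}$; second, a key bijective identity shows that $\sum_{\min(M_j,N_j)=0,\ \sum M_j=\sum N_j}\prod_j A_j(K_j+M_j)B_j(K_j+N_j)X^{K_j+M_j}=\mathcal C(A_1\cup\cdots\cup A_\ell,B_1\cup\cdots\cup B_\ell)$ (proved by introducing a circle-method variable $e(\theta(\sum M_j-\sum N_j))$ and factorising); third, an inclusion--exclusion expansion of the right-hand side over disjoint $J_1,J_2\subset\{1,\dots,\ell\}$ matches the signed sum produced by the first lemma. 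Your sketch gives no indication of these steps, and without them the ``prime-by-prime matching'' you allude to is an assertion rather than an argument. The analytic reductions you describe (the $v_j$-integral yielding $\prod_j\chi(w_j+z_j-s-1)t^{w_j+z_j-s-2}$, the residue picking $U(\ell),V(\ell)$, the functional equation converting $\zeta(s+\alpha_j+\beta_j)\chi(\cdot)$ to $\zeta(1-s-\alpha_j-\beta_j)$) are all correct and mechanical; the missing piece is the combinatorial core.
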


\section{Preliminary reductions}
\begin{lemma}
\begin{eqnarray*}&&
\sum_{\epsilon_j \in \{-1,+1\}}\int_0^\infty \dots \int_0^\infty \hat{\psi}(\epsilon_1v_1+\dots\epsilon_\ell
v_\ell)  \prod_{j=1}^\ell v_j ^{s+1-w_j-z_j} dv_\ell\dots dv_1\\
&&\qquad =\int_0^\infty \psi(t) \prod_{j=1}^\ell \chi(w_j+z_j-s-1) t^{w_j+z_j-s-2} ~dt.
   \end{eqnarray*}
   \end{lemma}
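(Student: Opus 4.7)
The plan is to exchange the order of summation/integration so that the sum over signs $\epsilon_j$ collapses the $v_j$-integrals into a product of one-dimensional Mellin-type transforms, and then to recognize the resulting product as the functional-equation factor $\chi$. First, I would insert the definition $\hat\psi(y)=\int_0^\infty \psi(t)\,e(-yt)\,dt$ and pull the $t$-integral to the outside. Since $e\bigl(-(\epsilon_1 v_1+\cdots+\epsilon_\ell v_\ell)t\bigr)=\prod_j e(-\epsilon_j v_j t)$, the sum over $(\epsilon_1,\dots,\epsilon_\ell)\in\{-1,+1\}^\ell$ factors as
\[
\sum_{\epsilon_j}\prod_{j=1}^\ell e(-\epsilon_j v_j t)=\prod_{j=1}^\ell\bigl(e(-v_jt)+e(v_jt)\bigr),
\]
so the LHS becomes
\[
\int_0^\infty \psi(t)\prod_{j=1}^\ell\!\left[\int_0^\infty\!\bigl(e(-v_jt)+e(v_jt)\bigr)v_j^{s+1-w_j-z_j}\,dv_j\right]dt.
\]

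Next I would evaluate the inner integral for each $j$ using the standard oscillatory Mellin transform. Setting $a=s+2-w_j-z_j$ and rotating contours (or inserting a damping factor $e^{-\eta v}$ with $\eta\downarrow 0$) yields
\[
\int_0^\infty e(\pm vt)\,v^{a-1}\,dv=\Gamma(a)\,e^{\pm i\pi a/2}(2\pi t)^{-a},
\]
valid initially in the strip $0<\Re a<1$. Summing the two signs gives the factor $2\cos(\pi a/2)\Gamma(a)(2\pi t)^{-a}$.

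Finally, I would substitute $\sigma=w_j+z_j-s-1$, so $a=1-\sigma$, and use $\cos(\pi(1-\sigma)/2)=\sin(\pi\sigma/2)$ to obtain
\[
2\cos(\pi a/2)\Gamma(a)(2\pi t)^{-a}=2\sin(\pi\sigma/2)\,\Gamma(1-\sigma)(2\pi)^{\sigma-1}\,t^{\sigma-1}=\chi(\sigma)\,t^{w_j+z_j-s-2},
\]
since $\chi(\sigma)=2^{\sigma}\pi^{\sigma-1}\sin(\pi\sigma/2)\Gamma(1-\sigma)$ is the standard form of the zeta functional-equation factor. Taking the product over $j$ matches the right-hand side exactly, and the identity then extends to all relevant $w_j,z_j,s$ by analytic continuation.

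The main obstacle is that the oscillatory integrals $\int_0^\infty e(\pm vt)v^{a-1}\,dv$ are only conditionally convergent, so the Fubini step needs care: one must either first restrict $\Re(w_j+z_j)$ so that each one-dimensional integral lies in a strip of convergence (after contour rotation) and then continue analytically, or regularize by $e^{-\eta v}$ and pass to the limit after evaluating the Mellin transforms in closed form. Once this regularization is set up, the rest of the argument is a direct computation using the Gamma function reflection/duplication identities encoded in $\chi$.
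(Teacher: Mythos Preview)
Your approach is correct and essentially matches the paper's: both open up $\hat\psi$ as a $t$-integral, rotate contours on each $v_j$-integral to obtain absolute convergence, and evaluate via the gamma function to produce the $\chi$-factors. The only difference is organizational---the paper peels off one variable at a time (sum over $\epsilon_\ell$, rotate the $v_\ell$-contour, swap with the $t$-integral, evaluate, then repeat for $v_{\ell-1}$, etc.), which makes the Fubini justification immediate at each step, whereas you factor the full $\epsilon$-sum at once and defer the convergence issue to a regularization or parameter restriction; either route is fine.
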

   \begin{proof} The case $\ell=1$ of this identity may be found in [CK1]. 
   We may prove the general case by working our way from the inside out and using the technique of that proof.  For example, with fixed $v_1,\dots ,v_{\ell-1}, \epsilon_1, \dots ,\epsilon_{\ell-1}$  we have
  that the integral over $v_\ell$ is 
  \begin{eqnarray*}
  \int_{v_\ell=0}^\infty \int_{t=0}^\infty \psi(t) e(t\epsilon_1 v_1+\dots t\epsilon_{\ell-1}v_{\ell-1})
  (e(tv_\ell)+e(-tv_\ell)) v_\ell^{s+1-w_\ell-z_\ell} ~dt ~dv_\ell 
   \end{eqnarray*}
   We split this into two double integrals, one with $e(tv_\ell)$ and the other with $e(-tv_\ell)$.
   The first we rotate the $v_\ell$-path onto the positive imaginary axis, and the second we rotate the $v_\ell$ path onto the negative imaginary axis. By absolute convergence, we may now interchange the order of integration to arrive at a sum of two $v_\ell$-integrals inside a $t$-integral. We evaluate the $v_\ell$ integrals using the definition of the gamma-function. Then we repeat the process to evaluate the sum over $\epsilon_{\ell-1}$ of the integral over $v_{\ell-1}$ for a fixed 
   $v_1,\dots ,v_{\ell-2}, \epsilon_1, \dots ,\epsilon_{\ell-2}$. And so on. 
   \end{proof}

 \section{Poles}
 We have 
 \begin{eqnarray*}
 \langle \tau_A(m) e(m/q)\rangle_{m= u} \sim
\frac1 q \frac{1}{2\pi i}\int_{|w-1|=\frac 18}
 \prod_{\alpha\in A}\zeta(w+\alpha) G_A(w,q) \big(\frac u q\big)^{w-1} ~dw,
 \end{eqnarray*}
 where $G$ is a multiplicative function for which
$$G_A(1- \alpha,p^r)=\prod_{\hat \alpha \in A'}\left(1-\frac{1}{p^{1+\hat \alpha -\alpha}}\right)\sum_{j=0}^\infty \frac{\tau_{A'}(p^{j+r})}{p^{j(1- \alpha)}}$$
with $A'=A-\{ \alpha\}$ .
 With $*:mN-nM=h $, this leads to 
 \begin{eqnarray*}  &&
\langle \tau_{A}(m)\tau_{B}(n)\rangle^{(*)}_{m= u}
\sim \frac{1}{M}\sum_{q=1}^\infty r_q(h)\langle \tau_A(m)e(mN/q)\rangle_{m= u}
\langle \tau_B(n)e(nM/q)\rangle_{n= \frac{uN}{M}}\\&& \qquad \sim
\sum_{\alpha\in A\atop  \beta\in B}u^{-\alpha- \beta}M^{-1+\beta}N^{-\beta}
Z(A'_{-\alpha})Z(B'_{-\beta})  \sum_{d\mid h} \frac 1 {d^{1-\alpha- \beta}} 
\\&& \qquad \qquad \times
\sum_{q} \frac{\mu(q)(qd,M)^{1-\beta}
(qd,N)^{1- \alpha}}{q^{2-\alpha-\beta}} 
  G_{A}\big(1-\alpha,\frac{qd}{(qd,N)}\big)  
  G_{B}\big(1-\beta,\frac{qd}{(qd,M)}) ,\nonumber
\end{eqnarray*}
where  
$$Z(A)=\prod_{a\in A}\zeta(1+a).$$

 Inserting this into $\mbox{LHS}_\ell$ we have
 \begin{eqnarray*}&&
\mbox{LHS}_\ell= \frac{1}{2\pi i} \int_{(2)} X^s \big(\frac{T }{2\pi  }\big)^{-\ell s}
\sum_{{(M_1,N_1)=\dots =(M_\ell,N_\ell)=1\atop N_1\dots N_\ell=M_1\dots M_\ell} \atop \epsilon_j\in \{-1,+1\}}
 \int_{0<  v_1,\dots, v_\ell < \infty }\hat{\psi}(\epsilon_1 v_1+\dots +\epsilon_\ell v_\ell)  \\&& 
\sum_{U(\ell)\subset A\atop
V(\ell)\subset B} \prod_{j=1}^\ell
\bigg[  Z((A_j)'_{-\alpha_j})Z((B_j)'_{-\beta_j})
 M_j^{\beta_j-1} N_j^{s+\alpha_j} \sum_{h_j, q_j,d_j}\frac{1}
{h_j ^{s+\alpha_j+\beta_j}d_j^{1+s}}  \frac{\mu(q_j)(q_jd_j,M_j)^{1-\beta_j}
(q_jd_j,N_j)^{1- \alpha_j}}{q_j^{2-\alpha_j-\beta_j}} 
 \\&&\quad 
  v_j^{s-1+\alpha_j+\beta_j}
  G_{A_j}\big(1-\alpha_j,\frac{q_jd_j}{(q_jd_j,N_j)}\big)  
  G_{B_j}\big(1-\beta_j,\frac{q_jd_j}{(q_jd_j,M_j)})  
\big(\frac{T }{2\pi  }\big)^{-\alpha_j-\beta_j}
~dv_j \bigg]~\frac{ds}{s}
\end{eqnarray*}
where $U(\ell)=\{\alpha_1,\dots,\alpha_\ell\}$ with $\alpha_j\in A_j$ and $V(\ell)=\{\beta_1,\dots , \beta_\ell\}$ with $\beta_j\in B_j$.
Now we sum over the $h_j$ to get factors $\zeta(s+\alpha_j+\beta_j)$. Thus,
 \begin{eqnarray*}&&
\mbox{LHS}_\ell= \frac{1}{2\pi i} \int_{(2)} X^s \big(\frac{T }{2\pi  }\big)^{-\ell s}
\sum_{{(M_1,N_1)=\dots =(M_\ell,N_\ell)=1\atop N_1\dots N_\ell=M_1\dots M_\ell} \atop \epsilon_j\in \{-1,+1\}}
 \int_{0<  v_1,\dots, v_\ell < \infty }\hat{\psi}(\epsilon_1 v_1+\dots +\epsilon_\ell v_\ell)  
\\&&   \sum_{U(\ell)\subset A\atop
V(\ell)\subset B} \prod_{j=1}^\ell  \bigg[
  Z((A_j)'_{-\alpha_j})Z((B_j)'_{-\beta_j})
 M_j^{\beta_j-1} N_j^{s+\alpha_j} \zeta(s+\alpha_j+\beta_j)\\&& \qquad
 \sum_{ q_j,d_j}\frac{1}
{ d_j^{1+s}}  \frac{\mu(q_j)(q_jd_j,M_j)^{1-\beta_j}
(q_jd_j,N_j)^{1- \alpha_j}}{q_j^{2-\alpha_j-\beta_j}} 
 \\&&\quad 
  v_j^{s-1+\alpha_j+\beta_j}
  G_{A_j}\big(1-\alpha_j,\frac{q_jd_j}{(q_jd_j,N_j)}\big)  
  G_{B_j}\big(1-\beta_j,\frac{q_jd_j}{(q_jd_j,M_j)})  
\big(\frac{T }{2\pi  }\big)^{-\alpha_j-\beta_j}
~dv_j \bigg]~\frac{ds}{s}
\end{eqnarray*}
If we move the path of integration in $s$ to the line with $\Re s=\epsilon$, then we cross the poles of the $\zeta(s+\alpha_j+\beta_j)$ at $s=1-\alpha_j-\beta_j$. These contribute an amount that cancels the contribution of the $R_{A;B}(T;X)$.

 Next, we apply the lemma of Section 5  to evaluate the integral over the $v_j$ and obtain a factor of $ \chi(1-\alpha_j-\beta_j-s)$. 
 Then using  the functional equation for $\zeta$ we have  $\zeta(1-s-\alpha_j-\beta_j)$.
 Thus, the $s$-integrand without the $\frac{X^s}{s}$ in $\mbox{LHS}$ becomes
  \begin{eqnarray*}&&
   \int_0^\infty \psi(t)  \big(\frac{T }{2\pi  }\big)^{-\ell s}
\sum_{{(M_1,N_1)=\dots =(M_\ell,N_\ell)=1\atop N_1\dots N_\ell=M_1\dots M_\ell}  }
\sum_{U(\ell)\subset A\atop
V(\ell)\subset B} \prod_{j=1}^\ell
\zeta(1-s-\alpha_j-\beta_j) t^{-s-\alpha_j-\beta_j}
  \\&&\qquad \bigg[Z((A_j)'_{-\alpha_j})Z((B_j)'_{-\beta_j})
 M_j^{\beta_j-1} N_j^{s+\alpha_j} \sum_{q_j,d_j}\frac{1}
{d_j ^{1+s }}  \frac{\mu(q_j)(q_jd_j,M_j)^{1-\beta_j}
(q_jd_j,N_j)^{1- \alpha_j}}{q_j^{2-\alpha_j-\beta_j}} 
 \\&&\quad 
  G_{A_j}\big(1-\alpha_j,\frac{q_jd_j}{(q_jd_j,N_j)}\big)  
  G_{B_j}\big(1-\beta_j,\frac{q_jd_j}{(q_jd_j,M_j)})  
\big(\frac{T }{2\pi  }\big)^{-\alpha_j-\beta_j}
  \bigg] ~dt;
\end{eqnarray*}
our goal is to prove that this is equal to
 \begin{eqnarray*}
 &&   \int_0^\infty \psi(t)
 \sum_{U(\ell)\subset A\atop V(\ell) \subset B}
 \bigg(\frac {Tt}{2\pi}\bigg)^{-\sum_{\hat \alpha\in U(\ell)\atop \hat \beta \in V(\ell)}(\hat\alpha+\hat \beta+s)}
   \\&&\qquad \qquad 
 \times \mathcal B(A_s-U(\ell)_s+V(\ell)^-,B-V(\ell)+{U(\ell)_s}^-,1) ~dt. 
\end{eqnarray*}
 This further reduces to proving  for each $U(\ell), V(\ell)$ that
 \begin{eqnarray*}&&
 \sum_{{(M_1,N_1)=\dots =(M_\ell,N_\ell)=1\atop N_1\dots N_\ell=M_1\dots M_\ell}  }
 \prod_{j=1}^\ell
\zeta(1-s-\alpha_j-\beta_j)  
  \\&&\qquad \bigg[  Z((A_j)'_{-\alpha_j})Z((B_j)'_{-\beta_j})
 M_j^{\beta_j-1} N_j^{s+\alpha_j} \sum_{ q_j,d_j}\frac{1}
{d_j ^{1+s}}  \frac{\mu(q_j)(q_jd_j,M_j)^{1-\beta_j}
(q_jd_j,N_j)^{1- \alpha_j}}{q_j^{2-\alpha_j-\beta_j}} 
 \\&&\qquad \qquad 
  G_{A_j}\big(1-\alpha_j,\frac{q_jd_j}{(q_jd_j,N_j)}\big)  
  G_{B_j}\big(1-\beta_j,\frac{q_jd_j}{(q_jd_j,M_j)})  
  \bigg]\\&&=
 \mathcal B(A_s-U(\ell)_s+V(\ell)^-,B-V(\ell)+{U(\ell)_s}^-,1).
 \end{eqnarray*}

\section{Local considerations}

 We shall find it convenient to state our main theorem as an identity  
 of the Euler factor at a prime $p$. We begin by introducing 
   a set-theoretic  notation.
First of all, since $p$ is fixed for this discussion  we will often suppress it. In fact we write $X$ for $1/p$
and mostly consider power series in $X$. We take the unusual step of suppressing not only the prime $p$ 
but the divisor function and so we write $A(n)$ in place of $\tau_A(p^n)$. Also, for a set $A$ we let 
$$A_\alpha=\{a+\alpha:a\in A\}.$$
A further piece of notation: $A^+=A\cup\{0\}$. We have two important identities. The first is 
$$A^+(d)=A(d)+A^+(d-1).$$
This is a special case of
$$ (A\cup\{-\alpha\})(d-1) = X^{\alpha} \bigg( (A\cup\{-\alpha\})(d) - A(d)\bigg).$$
The other identity is 
$$\sum_{r=0}^R A(r+M)=  A^+(R+M)- A^+(M-1)$$
which follows by repeated application of the first identity. 

For arbitrary sets  $A$,$B$  we let
$$\mathcal \mathcal C(A,B):=\sum_{M=0}^\infty A(M)B(M)X^M.$$

Also, we let 
$$Z(A)=\sum_{j=0}^\infty A(j) X^j=\prod_{a\in A}(1-X^{1+a})^{-1}.$$

We begin with sets $A_j,B_j$  and numbers $\alpha_j,\beta_j$  for $j=1,2,\dots, \ell$.
We consider
\begin{eqnarray*}&&
\mathcal Q:= \sum_{\min(M_j,N_j)=0\atop
\sum_{j=1}^\ell M_j =\sum_{j=1}^\ell N_j}  
\prod_{j=1}^\ell \Sigma_{A_j,B_j,\alpha_j,\beta_j}(M_j,N_j) X^{ M_j(1-\beta_j)-N_j\alpha_j}
\end{eqnarray*}
where 
\begin{eqnarray*}
 \Sigma_{A,B,\alpha,\beta}(M,N)&=&\sum_{d,j,k\atop q\le 1  }
  (-1)^q X^{d(\alpha+\beta)}{A_{-\alpha}}(j+q+d-\min(q+d,N))\\&&\qquad \times {B_{-\beta}}(k+q+d-\min(q+d,M))  
  X^{2q+d+j+k-\min(q+d,M)-\min(q+d,N)}
\end{eqnarray*} 
 
Our identity is 
\begin{theorem}
$$\mathcal Q=\prod_{j=1}^\ell (1-X^{1-\alpha_j-\beta_j}) \mathcal C(A_1\cup\dots 
\cup A_\ell \cup\{-\beta_1,\dots, -\beta_\ell\},
B_1\cup\dots \cup  B_\ell \cup \{-\alpha_1,\dots,-\alpha_\ell\}).$$
\end{theorem}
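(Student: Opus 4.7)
The identity is a purely algebraic statement in the formal power series ring in $X$ over $\mathbb{Q}[\alpha_j,\beta_j]$, and I would prove it by direct evaluation in three stages.

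\textbf{Stage 1 (collapsing the $(q,d)$-sum).} The variable $q$ ranges over $\{0,1\}$ (encoding $\mu(p^q)$), and together with $d$ it appears in the $X$-exponent of $\Sigma$ only through the combination $X^{d(1+\alpha+\beta)+2q}$, with sign $(-1)^q$. Changing variables to $u=q+d$, the alternation over $q$ telescopes: for each fixed $u\ge 1$ the two $q$-values combine to $X^{u(1+\alpha+\beta)}(1-X^{1-\alpha-\beta})$, whereas $u=0$ forces $q=d=0$ and carries no such factor. This is where $\prod_{j=1}^\ell(1-X^{1-\alpha_j-\beta_j})$ on the right-hand side originates; the remaining task is to show that the $u$-telescoped $\Sigma$'s, summed over $(M_j,N_j)$, reproduce $\mathcal{C}$ of the large unions.

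\textbf{Stage 2 (closed form for $\Sigma(M,N)$).} By the symmetry $(A,\alpha,M)\leftrightarrow(B,\beta,N)$ it suffices to treat $N=0$. Then $\min(u,N)=0$ and $\min(u,M)\in\{u,M\}$ according to whether $u\le M$, so the $j$- and $k$-sums become shifted series of the form $\sum_{r\ge r_0}A_{-\alpha}(r)X^{r-r_0}$. I would evaluate these using the two identities $A^+(d)=A(d)+A^+(d-1)$ and $\sum_{r=0}^R A(r+M)=A^+(R+M)-A^+(M-1)$, iterating them to rewrite each shifted series as $Z(A_{-\alpha})$ minus a finite correction expressed through partial sums $A_{-\alpha}^+$. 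The interpretation $A^+=A\cup\{0\}$ is what eventually introduces new elements into the sets appearing on the right.

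\textbf{Stage 3 (outer sum and decoupling).} I then insert these closed forms into $\prod_j\Sigma_{A_j,B_j,\alpha_j,\beta_j}(M_j,N_j)X^{M_j(1-\beta_j)-N_j\alpha_j}$ and sum over $(M_j,N_j)$ subject to $\min(M_j,N_j)=0$ and the global constraint $\sum M_j=\sum N_j$. To decouple the $\ell$ factors I detect the constraint by an auxiliary variable $Y$, writing the indicator as the constant term in $Y^{\sum M_j-\sum N_j}$. Each decoupled factor then evaluates to a rational function in $X,Y$ whose structure matches $Z(A_j\cup\{-\beta_j\})$ in the variable $XY$ against $Z(B_j\cup\{-\alpha_j\})$ in $X/Y$; taking the constant term in $Y$ across the product recovers the diagonal pairing defining $\mathcal{C}$ on the combined unions $\bigcup_j A_j\cup\{-\beta_j\}$ and $\bigcup_j B_j\cup\{-\alpha_j\}$, by multiplicativity of $Z$ over set-unions.

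\textbf{Main obstacle.} The principal difficulty is the bookkeeping that places $\{-\beta_j\}$ in the first argument of $\mathcal{C}$ and $\{-\alpha_j\}$ in the second---the ``cross-pairing'' of shifts between partners being the distinguishing combinatorial feature of a type-II swap. Verifying this requires showing that the partial-sum corrections from Stage 2, combined with the exponent $X^{M_j(1-\beta_j)-N_j\alpha_j}$ and the $(1-X^{1-\alpha-\beta})$ factor from Stage 1, produce exactly these adjoined elements (not $\{-\alpha_j\}$ on the $A$-side or an extra $\{0\}$), and that all remaining correction terms cancel across the $u=0$ and $u\ge 1$ contributions once the $Y$-constant-term is taken.
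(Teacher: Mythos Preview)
Your architecture matches the paper's: a closed form for $\Sigma_{A,B,\alpha,\beta}(M,0)$, decoupling of the constraint $\sum M_j=\sum N_j$ by an auxiliary variable (the paper uses an $e(\theta)$-integral where you propose a constant term in $Y$), and identification of the decoupled factors with the enlarged generating series. The paper organises this into three lemmas: the closed form, the identity $\mathcal F=\mathcal C$, and an inclusion--exclusion expansion of the right-hand side over disjoint $J_1,J_2\subset\{1,\dots,\ell\}$.

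The one substantive divergence is your Stage~1. The substitution $(q,d)\mapsto u=q+d$ is valid, but it does not factor $(1-X^{1-\alpha-\beta})$ out of $\Sigma$: the boundary $u=0$ contributes the $(M,N)$-independent constant $Z(A_{-\alpha})Z(B_{-\beta})$, and after decoupling this generates a separate rational function of the auxiliary variable that must be absorbed rather than discarded. The paper avoids this by a different telescoping (split $d<M$ versus $d\ge M$, then collapse in $j$ and in $(j,k)$), arriving directly at the three-term closed form
\[
\Sigma(M,0)\,X^{M(1-\beta)}=X^{M}\sum_{K\ge 0}\Big[(A\cup\{-\beta\})(K{+}M)\,B(K)-A(K{+}M)\,B(K)+A(K{+}M)\,(B\cup\{-\alpha\})(K)\Big]X^{K}.
\]
Once this is in hand, your Stage~3 claim becomes a one-line rational-function identity: writing $P,Q$ for the decoupling variables with $PQ=X$ and $\tilde Z(C;P)=\sum_{m\ge0}C(m)P^m$, the $j$th decoupled factor is
\[
\tilde Z(A_j;P)\tilde Z(B_j;Q)\Bigl(\frac{1}{1-X^{-\beta_j}P}-1+\frac{1}{1-X^{-\alpha_j}Q}\Bigr)=(1-X^{1-\alpha_j-\beta_j})\,\tilde Z(A_j\cup\{-\beta_j\};P)\,\tilde Z(B_j\cup\{-\alpha_j\};Q),
\]
so both the prefactor and the cross-placed shifts $-\beta_j,-\alpha_j$ emerge together from the three-term combination \emph{after} decoupling rather than from the $q$-alternation \emph{before}. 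Your ``main obstacle'' then dissolves: there is no $u=0$ versus $u\ge1$ cancellation to track once the closed form is the three-term one.
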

 
 By the results of the previous section, Theorem 1 follows from Theorem 2 with $(A_j\setminus\{\alpha_j\})_s$ in place of $A_j$, $B_j\setminus\{\beta_j\}$ in place of $B_j$, and $\alpha_j+s$
in place of $\alpha_j$.

 \subsection{Some lemmas}
 Because of the condition $\min(M_j,N_j)=0$ 
 we  consider $\Sigma_{A,B,\alpha,\beta}(M,0)$ and $\Sigma_{A,B,\alpha,\beta}(0,N)$.
We have
 \begin{lemma}
 \begin{eqnarray*}
\Sigma_{A,B,\alpha,\beta}(M,0)&=&X^{M\beta} \bigg(
     \sum_{K }(B\cup\{-\alpha\})(K) A (K+M )  
  X^{K }
-  \sum_{K }B (K) {A }(K+M )  
  X^{K }\\
&&\qquad 
+\sum_K  {B }(K)  
  (A\cup\{-\beta\})(K+M )X^K\bigg)
\end{eqnarray*}
and
\begin{eqnarray*}
\Sigma_{A,B,\alpha,\beta}(0,N)&=&X^{N\alpha}\bigg(
    \sum_{K }(A\cup\{-\beta\})(K) B (K+N )  
  X^{K}
-  \sum_{K }A (K)B(K+N )  
  X^{K }\\
&&\qquad +\sum_K A(K)  
  (B\cup\{-\alpha\})(K+N )X^K\bigg).
\end{eqnarray*}

\end{lemma}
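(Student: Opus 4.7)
The defining sum for $\Sigma_{A,B,\alpha,\beta}(M,N)$ is visibly symmetric under the swap $(A,B,\alpha,\beta,M,N,j,k)\leftrightarrow(B,A,\beta,\alpha,N,M,k,j)$, so $\Sigma_{A,B,\alpha,\beta}(0,N)=\Sigma_{B,A,\beta,\alpha}(N,0)$ and the second identity follows from the first by relabelling. I will therefore focus on $\Sigma_{A,B,\alpha,\beta}(M,0)$. With $N=0$ one has $\min(q+d,0)=0$, which eliminates the $N$-dependence of the $A_{-\alpha}$-argument and of one $\min$-term in the exponent, leaving a sum over $d,j,k\ge 0$ and $q\in\{0,1\}$ with a single remaining $\min(q+d,M)$.

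The next step is to reparametrize by $R=q+d$. For $R\ge 1$ the two choices $(q,d)\in\{(0,R),(1,R-1)\}$ yield identical arguments of $A_{-\alpha}(j+R)$ and $B_{-\beta}(k+R-\min(R,M))$, so their $(-1)^q$ contributions combine into a single term at each $R$. I then split the $R$-sum at $R=M$. In the range $R\le M$ the $\min$ collapses to $R$, the $B_{-\beta}$-argument becomes $k$, and the $k$-sum decouples. In the range $R>M$ the $\min$ equals $M$; the shift $k\mapsto k'=k+R-M$ in the $B_{-\beta}$-argument produces the prefactor $X^{M\beta}$ via $B_{-\beta}(k')=X^{-k'\beta}B(k')$. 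Throughout, the conversions $A_{-\alpha}(n)=X^{-n\alpha}A(n)$ and $B_{-\beta}(n)=X^{-n\beta}B(n)$ turn the shifted divisor-functions into plain $A, B$, and the leftover exponentials combine with the $R$-sums $\sum_r A(n-r)X^{-r\beta}$ and $\sum_r B(n-r)X^{-r\alpha}$ to manufacture the augmented-set expressions $(A\cup\{-\beta\})(n)$ and $(B\cup\{-\alpha\})(n)$. This cross-pairing --- the shift parameter of $A$ feeding the augmentation of $B$, and vice versa --- is dictated by the index structure of the definition of $\Sigma$.

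The three terms in the lemma then appear naturally: the $R\le M$ range contributes $\sum_K (B\cup\{-\alpha\})(K)\,A(K+M)\,X^K$; the $R>M$ range contributes $\sum_K B(K)(A\cup\{-\beta\})(K+M)\,X^K$; and the $(-1)^q$ alternation at the boundary $R=M$ leaves the negative correction $-\sum_K B(K)\,A(K+M)\,X^K$, all uniformly multiplied by $X^{M\beta}$. The main obstacle is the combinatorial bookkeeping at the boundary $R=M$: one must verify that the middle term enters exactly once (neither doubled between ranges nor missed) and with the correct sign produced by the $q$-alternation. Once this accounting is fixed, the preamble identities $(A\cup\{-\alpha\})(d-1)=X^\alpha\bigl((A\cup\{-\alpha\})(d)-A(d)\bigr)$ and $\sum_{r=0}^R A(r+M)=A^+(R+M)-A^+(M-1)$ are precisely the tools needed to convert the telescoped $R$-sums into the forms on the right-hand side, completing the verification.
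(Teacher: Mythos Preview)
Your reparametrisation $R=q+d$ is a reasonable first move, and you are right that for fixed $R$ the arguments of $A_{-\alpha}$ and $B_{-\beta}$ coincide for the two choices $(q,d)\in\{(0,R),(1,R-1)\}$. But the weights do \emph{not}: the factors $X^{d(\alpha+\beta)}$ and $X^{2q+d}$ differ between the two. If one first converts $A_{-\alpha}(n)=X^{-n\alpha}A(n)$, $B_{-\beta}(n)=X^{-n\beta}B(n)$, the $q$--sum at each $R\ge 1$ collapses to a common term times the \emph{uniform} factor $(1-X^{1-\alpha-\beta})$, with $R=0$ standing apart without this factor. Nothing special happens at $R=M$; so your attribution of the middle term $-\sum_K B(K)A(K+M)X^K$ to ``the $(-1)^q$ alternation at the boundary $R=M$'' is not correct, and the clean trichotomy $\{R\le M\}\to T_1$, $\{R>M\}\to T_3$, boundary $\to -T_2$ does not hold. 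Concretely, the $R\le M$ block after your $q$--combination is $Z(B_{-\beta})\bigl[\Phi_A(0)+(1-X^{1-\alpha-\beta})\sum_{R=1}^{M}X^{R\beta}\Phi_A(R)\bigr]$ with $\Phi_A(R)=\sum_{j}A(j+R)X^{j(1-\alpha)}$; to proceed one needs a \emph{further} telescoping in $R$ (via $X^{1-\alpha}\Phi_A(R)=\Phi_A(R-1)-A(R-1)$), which produces an extra piece $Z(B_{-\beta})(A_\beta)^+(M-1)$ that must then be cancelled against a matching piece from the $R>M$ block. None of this bookkeeping is in your outline, and it is exactly where the three target sums and their signs are actually determined.

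The paper organises the computation differently. It keeps the $q=0$ and $q=1$ sums separate, splits instead at $d<M$ versus $d\ge M$, and telescopes in $j$ (and, for $d\ge M$, in $j$ and $k$ simultaneously). For $d<M$ the two $q$-pieces differ only by the shift $j\mapsto j+1$, so the $j$-sum collapses to its $j=0$ boundary and yields $\Sigma^-(M,0)=Z(B_{-\beta})(A_\beta)^+(M-1)$. For $d\ge M$ one replaces $d$ by $d+M$ and telescopes in both $j$ and $k$; the inclusion--exclusion of the two boundaries $j=0$, $k=0$ produces three terms directly. One of these (after the rewriting $\sum_{d\le K}A_\beta(d+M)=(A_\beta)^+(K+M)-(A_\beta)^+(M-1)$) cancels $\Sigma^-(M,0)$, and the remaining three are precisely the sums in the statement once one uses $(B_\alpha)^+=(B\cup\{-\alpha\})_\alpha$ and $(A_\beta)^+=(A\cup\{-\beta\})_\beta$. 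Thus the negative middle term comes from the $j=k=0$ correction in the $(j,k)$-telescoping, not from any boundary in $R$.
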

We defer the proof to later. 

The result of the lemma leads us to consider 
$$\mathcal F(A_1,\dots,A_\ell;B_1,\dots, B_\ell)=
\sum_{
{\sum M_i=\sum N_i  \atop 
\min(M_i,N_i)=0}\atop K_i}
\prod_{j=1}^\ell 
A_j(K_j+M_j)B_j(K_j+N_j)X^{K_j+M_j}.$$
 We will prove
\begin{lemma} 
We have
$$\mathcal F(A_1,\dots,A_\ell;B_1,\dots, B_\ell)=
\mathcal C(A_1\cup\dots \cup A_\ell, B_1\cup\dots \cup B_\ell ).
$$
\end{lemma}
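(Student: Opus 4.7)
The plan is to recognize that the triple-index sum defining $\mathcal F$ is just a double sum in disguise, and then to factor it into two independent convolution sums which are the local Euler factors of the divisor functions attached to the unions $A_1\cup\dots\cup A_\ell$ and $B_1\cup\dots\cup B_\ell$. No analytic input is required; the whole identity is formal.

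First I would set up the change of variables $(K_j,M_j,N_j)\leftrightarrow(m_j,n_j)$ via $m_j=K_j+M_j$ and $n_j=K_j+N_j$. Because the constraint $\min(M_j,N_j)=0$ forces exactly one of $M_j,N_j$ to vanish while $K_j$ ranges freely over $\mathbb Z_{\ge 0}$, this is a bijection from triples $(K_j,M_j,N_j)$ with $\min(M_j,N_j)=0$ onto pairs $(m_j,n_j)\in\mathbb Z_{\ge 0}^{2}$, with inverse $K_j=\min(m_j,n_j)$ and $M_j-N_j=m_j-n_j$. Under this bijection the global constraint $\sum M_j=\sum N_j$ translates to $\sum m_j=\sum n_j$, and the monomial weight $X^{K_j+M_j}$ becomes $X^{m_j}$. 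Hence
$$\mathcal F(A_1,\dots,A_\ell;B_1,\dots,B_\ell)=\sum_{\substack{m_j,n_j\ge 0\\ \sum m_j=\sum n_j}}\prod_{j=1}^\ell A_j(m_j)\,B_j(n_j)\,X^{m_j}.$$

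Next I would group terms by the common value $M=\sum m_j=\sum n_j$, pull out $X^M=\prod_j X^{m_j}$, and observe that the remaining sum separates into an $A$-part and a $B$-part:
$$\mathcal F=\sum_{M\ge 0} X^M\bigg(\sum_{m_1+\dots+m_\ell=M}\prod_{j=1}^\ell A_j(m_j)\bigg)\bigg(\sum_{n_1+\dots+n_\ell=M}\prod_{j=1}^\ell B_j(n_j)\bigg).$$

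Finally I would invoke the elementary convolution identity for divisor coefficients at prime powers. Since $\tau_{A\cup B}=\tau_A\ast\tau_B$, in the paper's local notation one has $(A\cup B)(n)=\sum_{k+l=n}A(k)B(l)$, and iterating gives $(A_1\cup\dots\cup A_\ell)(M)=\sum_{m_1+\dots+m_\ell=M}\prod_j A_j(m_j)$, with the analogous statement for the $B_j$. Substituting these back yields $\mathcal F=\sum_M X^M(A_1\cup\dots\cup A_\ell)(M)(B_1\cup\dots\cup B_\ell)(M)$, which is exactly $\mathcal C(A_1\cup\dots\cup A_\ell,B_1\cup\dots\cup B_\ell)$ by the definition of $\mathcal C$. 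The only real subtlety is carefully justifying the bijection in the first step (in particular tracking that $X^{K_j+M_j}$ really is $X^{m_j}$ and not $X^{n_j}$); once that is in place, the rest is a clean reorganization.
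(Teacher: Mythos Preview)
Your proof is correct and rests on the same bijection as the paper's: the map $(K,M,N)\mapsto(m,n)=(K+M,K+N)$ with inverse $K=\min(m,n)$, which identifies triples with $\min(M,N)=0$ and arbitrary pairs $(m,n)\in\mathbb Z_{\ge0}^2$. The only organizational difference is in how the diagonal constraint $\sum M_j=\sum N_j$ is handled. The paper writes $X=Y^2$, inserts $e(\theta(\sum M_j-\sum N_j))$ and integrates over $\theta\in[0,1]$ so that both sides factor completely as products over $j$; it then proves the single-index identity $\sum_{\min(M,N)=0,\,K}A(M+K)B(N+K)Y^{K+M}Z^{K+N}=\big(\sum_R A(R)Y^R\big)\big(\sum_S B(S)Z^S\big)$ and integrates back. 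You instead apply the bijection coordinatewise first, so the constraint becomes $\sum m_j=\sum n_j$, and then group by the common value $M$ to separate the $A$- and $B$-parts directly. Your route is slightly more elementary in that it avoids the auxiliary $\theta$-integration, while the paper's route has the mild advantage of isolating a clean one-variable identity that can be reused; but the two arguments are really the same computation in a different order.
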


The right-hand side of Theorem 2  may be expanded.  This leads to
\begin{lemma} For  $J\subset \{1,\dots,\ell\}$ let
$$-\beta_J=\{-\beta_j:j\in J\}\qquad -\alpha_J=\{-\alpha_j:j\in J\}.$$
We have 
\begin{eqnarray*}&&
\prod_{j=1}^\ell (1-X^{1-\alpha_j-\beta_j}) \mathcal C(A_1\cup\dots 
\cup A_\ell \cup\{-\beta_1,\dots, -\beta_\ell\},
B_1\cup\dots \cup  B_\ell \cup \{-\alpha_1,\dots,-\alpha_\ell\})\\
&& \qquad = 
(-1)^\ell\sum_{J_1, J_2\subset \{1,\dots,\ell\}
\atop J_1\cap J_2 =\emptyset} (-1)^{|J_1|+|J_2|} \mathcal C (A\cup -\beta_{J_1},B\cup -\alpha_{J_2}) 
\end{eqnarray*}
where $A=A_1\cup\dots\cup A_\ell$ and $B=B_1\cup\dots \cup B_\ell$.
\end{lemma}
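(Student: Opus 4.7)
My plan is to proceed by induction on $\ell$, with the base case $\ell=1$ handled by a direct manipulation using the recursive relation
$$(A\cup\{-\alpha\})(d) - X^{-\alpha}(A\cup\{-\alpha\})(d-1) = A(d),$$
which is just a rearrangement of the identity $(A\cup\{-\alpha\})(d-1) = X^{\alpha}\bigl((A\cup\{-\alpha\})(d) - A(d)\bigr)$ stated earlier in the paper, and which expresses the factorization $Z(A\cup\{-\alpha\}) = (1-X^{1-\alpha})^{-1}Z(A)$ at the level of coefficients.

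For the base case, I would expand
$$(1-X^{1-\alpha-\beta})\mathcal C(A\cup\{-\beta\},B\cup\{-\alpha\}) = \sum_M \bigl[(A\cup\{-\beta\})(M)(B\cup\{-\alpha\})(M) - X^{-\alpha-\beta}(A\cup\{-\beta\})(M-1)(B\cup\{-\alpha\})(M-1)\bigr]X^M,$$
split $X^{-\alpha-\beta} = X^{-\beta}\cdot X^{-\alpha}$ and apply the recursive relation to each factor. The second summand then becomes $\bigl[(A\cup\{-\beta\})(M)-A(M)\bigr]\bigl[(B\cup\{-\alpha\})(M)-B(M)\bigr]$; after expansion and cancellation with the first summand, the bracket reduces to $A(M)(B\cup\{-\alpha\})(M)+(A\cup\{-\beta\})(M)B(M)-A(M)B(M)$, which sums to
$$\mathcal C(A\cup\{-\beta\},B)+\mathcal C(A,B\cup\{-\alpha\})-\mathcal C(A,B),$$
matching the $\ell=1$ case of the identity (the three disjoint pairs $(J_1,J_2)$ being $(\{1\},\emptyset)$, $(\emptyset,\{1\})$, $(\emptyset,\emptyset)$, with signs $+,+,-$ coming from the $(-1)^{\ell}(-1)^{|J_1|+|J_2|}$ weighting).

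For the inductive step with $\ell\ge 2$, I would peel off the factor $(1-X^{1-\alpha_\ell-\beta_\ell})$ and apply the base case to $\mathcal C(\tilde A\cup\{-\beta_\ell\},\tilde B\cup\{-\alpha_\ell\})$ where $\tilde A=A\cup\{-\beta_j\}_{j<\ell}$ and $\tilde B=B\cup\{-\alpha_j\}_{j<\ell}$, producing three $\mathcal C$-terms. Then I would apply the inductive hypothesis to each of the three (with base sets enlarged by $\{-\beta_\ell\}$, by $\{-\alpha_\ell\}$, or by neither) against the remaining product $\prod_{j<\ell}(1-X^{1-\alpha_j-\beta_j})$. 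Regrouping the resulting sums according to whether $\ell\in J_1'$, $\ell\in J_2'$, or $\ell\notin J_1'\cup J_2'$ matches the three cases of the RHS for $\ell$. The only delicate point, which I expect to be the main obstacle, is the sign bookkeeping: the factor $(-1)^{\ell-1}$ from the inductive hypothesis, the signs $+,+,-$ from the base case, and the shifts $|J_i'|=|J_i|+[\ell\in J_i']$ must combine to $(-1)^\ell(-1)^{|J_1'|+|J_2'|}$ uniformly across the three cases, which a short case check confirms.
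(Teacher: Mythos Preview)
Your argument is correct. The base-case computation and the inductive bookkeeping both go through exactly as you describe, and the sign check in the three cases is routine.

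The paper takes a different route. Rather than inducting on $\ell$, it writes
\[
\mathcal C(A',B')=\int_0^1\Bigl(\sum_{m}A'(m)(X^{1/2}e(\theta))^m\Bigr)\Bigl(\sum_{n}B'(n)(X^{1/2}e(-\theta))^n\Bigr)\,d\theta,
\]
so that when $A'=A\cup\{-\beta_1,\dots,-\beta_\ell\}$ and $B'=B\cup\{-\alpha_1,\dots,-\alpha_\ell\}$ the integrand factors over $j=1,\dots,\ell$. The paper then applies your base-case identity to each factor $(1-X^{1-\alpha_j-\beta_j})\sum_{m_j,n_j}(A_j\cup\{-\beta_j\})(m_j)(B_j\cup\{-\alpha_j\})(n_j)\cdots$ simultaneously inside the integral, expands the resulting product of three-term sums, and integrates over $\theta$ to reassemble the $\mathcal C$'s on the right-hand side. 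In effect the circle-method trick replaces your induction by a one-shot factorization; the core algebraic step (your base case) is identical in both proofs. The paper's approach is more in keeping with the surrounding arguments, since the same $\theta$-integral device is what drives the proof of the second lemma; your induction is perhaps more self-contained and avoids introducing the auxiliary variable $\theta$.
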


The combination of these three lemmas easily leads to a proof of Theorem 2. 

\subsection{Proof of Theorem 2}
\begin{proof}
By Lemma 1 the left side of the identity in Theorem 2 may be written as 
\begin{eqnarray*}&&
  \sum_{\min(M_j,N_j)=0\atop
\sum_{j=1}^\ell M_j =\sum_{j=1}^\ell N_j}  
\prod_{j=1}^\ell \Sigma_{A_j,B_j,\alpha_j,\beta_j}(M_j,N_j) X^{ M_j(1-\beta_j)-N_j\alpha_j}\\\qquad&&
= \sum_{\min(M_j,N_j)=0\atop
\sum_{j=1}^\ell M_j =\sum_{j=1}^\ell N_j} X^{M_1+\dots M_\ell} \prod_{j=1}^\ell \bigg(
    \sum_{K }(A_j\cup\{-\beta_j\})(K+M_j) B_j (K+N_j )  
  X^{K}\\&&\qquad 
-  \sum_{K }A_j (K+M_j)B_j(K+N_j )  
  X^{K }\\
&&\qquad \qquad +\sum_K A_j(K+M_j)  
  (B_j\cup\{-\alpha_j\})(K+N_j )X^K\bigg).
\end{eqnarray*}
By Lemma 2 this is 
\begin{eqnarray*}
=(-1)^\ell\sum_{J_1, J_2\subset \{1,\dots,\ell\}
\atop J_1\cap J_2 =\emptyset} (-1)^{|J_1|+|J_2|} \mathcal C (A\cup -\beta_{J_1},B\cup -\alpha_{J_2}) 
\end{eqnarray*}
and by Lemma 3 this is 
\begin{eqnarray*}
=\prod_{j=1}^\ell (1-X^{1-\alpha_j-\beta_j}) \mathcal C(A_1\cup\dots 
\cup A_\ell \cup\{-\beta_1,\dots, -\beta_\ell\},
B_1\cup\dots \cup  B_\ell \cup \{-\alpha_1,\dots,-\alpha_\ell\})
\end{eqnarray*}
which is the right side of the identity in Theorem 2.

\end{proof}

\subsection{Proof of first lemma}
\begin{proof}

 Expanding the $q$-sum, we have
\begin{eqnarray*}&&
 =\sum_{d,j,k  }
    X^{d(\alpha+\beta)}{A_{-\alpha}}(j+d ) {B_{-\beta}}(k+d-\min(d,M))  
  X^{d+j+k-\min(d,M) }\\ && \qquad 
-
\sum_{d,j,k  }
    X^{d(\alpha+\beta)}{A_{-\alpha}}(j+1+d ) {B_{-\beta}}(k+1+d-\min(1+d,M))  
  X^{2+d+j+k-\min(1+d,M) }.
\end{eqnarray*} 
We split this into the terms with $d< M$ and those with $d\ge M$. We have
\begin{eqnarray*}
\Sigma^-(M,0)
&=&\sum_{j,k\atop 
d< M  }
    X^{d(\alpha+\beta)}{A_{-\alpha}}(j+d ) {B_{-\beta}}(k )  
  X^{j+k }\\ && \qquad 
-
\sum_{j,k\atop d < M  }
    X^{d(\alpha+\beta)}{A_{-\alpha}}(j+1+d ) {B_{-\beta}}(k )  
  X^{1+j+k  }
\\&=& Z(B_{-\beta})\bigg(
\sum_{j\atop 
d < M  }
    X^{d(\alpha+\beta)}{A_{-\alpha}}(j+d )  
  X^{j } 
-
\sum_{j\atop d < M  }
    X^{d(\alpha+\beta)}{A_{-\alpha}}(j+1+d )  
  X^{1+j }\bigg).
\end{eqnarray*}
The sum over $j$ telescopes so that this is 
\begin{eqnarray*}
\Sigma^-(M,0)
&=& Z(B_{-\beta})
\sum_{ 
d < M  }
    X^{d(\alpha+\beta)}{A_{-\alpha}}(d ) \\
&=& Z(B_{-\beta})
\sum_{ 
d < M  }
     {A_{\beta}}(d ) =  Z(B_{-\beta})(A_{\beta})^+(M-1).
\end{eqnarray*}

Next we consider 
\begin{eqnarray*}
\Sigma^+(M)&=&
 \sum_{j,k\atop d \ge M  }
    X^{d(\alpha+\beta)}{A_{-\alpha}}(j+d ) {B_{-\beta}}(k+d-M)  
  X^{d+j+k-M }\\ && \qquad 
-
\sum_{j,k\atop d \ge M  }
    X^{d(\alpha+\beta)}{A_{-\alpha}}(j+1+d ) {B_{-\beta}}(k+1+d-M)  
  X^{2+d+j+k-M }.
\end{eqnarray*}
We replace $d$ by $d+M$ and have 
\begin{eqnarray*}
\Sigma^+(M)&=&
 \sum_{j,k,d  }
    X^{(d+M)(\alpha+\beta)}{A_{-\alpha}}(j+d+M ) {B_{-\beta}}(k+d)  
  X^{d+j+k }\\ && \qquad 
-
\sum_{j,k,d  }
    X^{(d+M)(\alpha+\beta)}{A_{-\alpha}}(j+1+d+M ) {B_{-\beta}}(k+1+d)  
  X^{2+d+j+k }.
\end{eqnarray*}
Now the sum over $j$ and $k$ telescopes and we have 
\begin{eqnarray*}
\Sigma^+(M)&=&
 \sum_{j,d  }
    X^{(d+M)(\alpha+\beta)}{A_{-\alpha}}(j+d+M ) {B_{-\beta}}(d)  
  X^{d+j }\\ && \qquad 
+ \sum_{k,d  }
    X^{(d+M)(\alpha+\beta)}{A_{-\alpha}}(d+M ) {B_{-\beta}}(k+d)  
  X^{d+k }\\ && \qquad 
- \sum_{d  }
    X^{(d+M)(\alpha+\beta)}{A_{-\alpha}}(d+M ) {B_{-\beta}}(d)  
  X^{d }.\\ && \qquad
\end{eqnarray*}
 
We recognize a convolution in the first term and rewrite this as
\begin{eqnarray*}
\Sigma^+(M)&=&
 \sum_{r }
    X^{M(\alpha+\beta)}{A_{-\alpha}}(r+M ) (B_{\alpha})^+(r)  
  X^{r }\\ && \qquad 
+ \sum_{k,d  }
    X^{(d+M)(\alpha+\beta)}{A_{-\alpha}}(d+M ) {B_{-\beta}}(k+d)  
  X^{d+k }\\ && \qquad 
- \sum_{d  }
    X^{(d+M)(\alpha+\beta)}{A_{-\alpha}}(d+M ) {B_{-\beta}}(d)  
  X^{d }.\\ && \qquad 
\end{eqnarray*}
 The middle term here may be written as   
\begin{eqnarray*}
  \sum_{K  }
     {B_{-\beta}}(K)  
  X^{K } \sum_{d\le K}A_{\beta}(d+M )&=& \sum_{K  }
     {B_{-\beta}}(K)  
  X^{K } \left((A_{\beta})^+(K+M )-(A_{\beta})^+(M-1 )\right)\\
&=& \sum_K  {B_{-\beta}}(K)  
  (A_{\beta})^+(K+M )X^K-Z(B_{-\beta})(A_{\beta})^+(M-1 ).
\end{eqnarray*}
The second term of this cancels with $\Sigma^-(M,0)$
and so we have 
\begin{eqnarray*}
\Sigma_{A,B,\alpha,\beta}(M,0)&=&
    X^{M(\alpha+\beta)}\sum_{K }(B_{\alpha})^+(K) {A_{-\alpha}}(K+M )  
  X^{K }
- X^{M(\alpha+\beta)}\sum_{K }B_{\alpha}(K) {A_{-\alpha}}(K+M )  
  X^{K }\\
&&\qquad 
+\sum_K  {B_{-\beta}}(K)  
  (A_{\beta})^+(K+M )X^K.
\end{eqnarray*}
This may be rewritten as 
\begin{eqnarray*}
\Sigma_{A,B,\alpha,\beta}(M,0)&=&X^{M\beta} \bigg(
     \sum_{K }(B\cup\{-\alpha\})(K) A (K+M )  
  X^{K }
-  \sum_{K }B (K) {A }(K+M )  
  X^{K }\\
&&\qquad 
+\sum_K  {B }(K)  
  (A\cup\{-\beta\})(K+M )X^K\bigg).
\end{eqnarray*}
\end{proof}

By symmetry
\begin{eqnarray*}
\Sigma_{A,B,\alpha,\beta}(0,N)&=&X^{N\alpha}\bigg(
    \sum_{K }(A\cup\{-\beta\})(K) B (K+N )  
  X^{K}
-  \sum_{K }A (K)B(K+N )  
  X^{K }\\
&&\qquad +\sum_K A(K)  
  (B\cup\{-\alpha\})(K+N )X^K\bigg).
\end{eqnarray*}

\subsection{Proof of second lemma}
\begin{proof}  We prove more generally that
\begin{eqnarray*}&& \sum_{
{M_1+\dots+M_\ell=N_1+\dots +N_\ell  \atop
\min(M_1,N_1)=\dots = \min(M_\ell,N_\ell)=0}\atop K_1,\dots K_\ell}
 A_1(K_1+M_1)B_1(K_1+N_1)\dots 
A_\ell(K_\ell+M_\ell)B_\ell(K_\ell+N_\ell)X^{K_1+\dots +K_\ell +M_1+\dots
+M_\ell } \\&&\qquad \qquad \qquad  =
\sum_{N=0}^\infty (A_1*\dots *A_\ell)(N)(B_1*\dots *B_\ell)(N)X^N
\end{eqnarray*}
where the $A_i$, and $B_i$ are any functions on the natural numbers (i.e.
sequences) and $*$ just means the usual Cauchy convolution one encounters when
multiplying power series together.
It suffices to prove  
\begin{eqnarray*}&& \sum_{
\min(M_1,N_1)=\dots = \min(M_\ell,N_\ell)=0\atop K_1,\dots K_\ell}
 A_1(K_1+M_1)B_1(K_1+N_1)\dots 
A_\ell(K_\ell+M_\ell)B_\ell(K_\ell+N_\ell)\\&& \qquad \qquad \times Y^{2(K_1+\dots +K_\ell) +M_1+\dots
+M_\ell +N_1+\dots +N_\ell} 
e(\theta(M_1+\dots +M_\ell-N_1-\dots N_\ell))
\\&&\qquad \qquad \qquad  =
\sum_{M,N=0}^\infty (A_1*\dots *A_\ell)(M)(B_1*\dots *B_\ell)(N)Y^{M+N}e(\theta(M-N))
\end{eqnarray*}
as then our desired result follows upon integrating $\theta$ from 0 to 1 upon taking $X=Y^2$.
But now the left hand side is a product
\begin{eqnarray*}\prod_{j=1}^\ell \sum_{K,M,N\atop \min(M,N)=0}A_j(M+K)B_j(N+K)Y^{2K+M+N}e(\theta((M+K)-(N+K)))
\end{eqnarray*}
and the right hand side is a product
\begin{eqnarray*}
\prod_{j=1}^\ell \sum_{R=0}^\infty  A_j(R) (Ye(\theta))^R \sum_{
S=0}^\infty  B_j(S) (Ye(-\theta))^S.
\end{eqnarray*}
Therefore, it suffices to prove that
\begin{eqnarray*}
 \sum_{K,M,N\atop \min(M,N)=0}A(M+K)B(N+K)Y^{K+M}Z^{K+N} 
  =\sum_{R=0}^\infty  A(R)Y^R \sum_{
S=0}^\infty  B(S) Z^S.
\end{eqnarray*}
To do this, we consider the right hand side and order the double sum according to the minimum, call it $K$, of $R$ and $S$.  The right hand side may be rewritten as
\begin{eqnarray*}\sum_{K=0}^\infty \sum_{R,S\atop
\min(R,S)=K}A(R)Y^R B(S) Z^S.
\end{eqnarray*}
Replacing $R$ by $M+K$ and $S$ by $N+K$, we see that this is
exactly the left hand side.
\end{proof}

\subsection{Proof of third lemma}
\begin{proof}
Recall that 
$$X^{-\beta} (A\cup \{-\beta\})(n)=(A\cup\{-\beta\})(n+1)-A(n+1).$$
Using this we see that
\begin{eqnarray*}&&
(1-X^{1-\alpha-\beta}) \sum_{m,n=0}^\infty (A\cup \{-\beta\})(m) X^{\frac m 2} (B\cup\{-\alpha\})(n) X^{\frac n 2}
\\&&\qquad =
\sum_{m,n=0}^\infty (A\cup \{-\beta\})(m) X^{\frac m 2} (B\cup\{-\alpha\})(n) X^{\frac n 2}\\
\qquad &&-\sum_{m,n=0}^\infty X^{\frac{(m+1)}{2}+\frac{(n+1)}2}\bigg((A\cup\{-\beta\})(m+1)-A(m+1)\bigg)\bigg((B\cup \{-\alpha\})(n+1)-B(n+1)\bigg).
\end{eqnarray*}
In the last  line we can replace $m+1$ and $n+1$ by $m$ and $n$ since 
$(A\cup\{-\beta\})(0)=A(0)=1$ and similarly for $B$.
Multiplying out the last line and combining it with the line above we have 
\begin{eqnarray*}&&
\sum_{m,n=0}^\infty (A\cup \{-\beta\})(m) X^{\frac m 2}B(n) X^{\frac n 2} 
+\sum_{m,n=0}^\infty A(m) X^{\frac m 2} (B\cup\{-\alpha\})(n) X^{\frac n 2}\\\qquad \qquad &&
-\sum_{m,n=0}^\infty A(m)B(n) X^{\frac {m+n} 2}.
\end{eqnarray*}
Now the idea is to apply this to each $A_j, B_j, \alpha_j, \beta_j$. 
We have 
\begin{eqnarray*}&&
 (1-X^{1-\alpha_1-\beta_1})\dots  (1-X^{1-\alpha_\ell-\beta_\ell})
 \mathcal C(A_1\cup\{-\beta_1\}\cup\dots \cup A_\ell\cup\{-\beta_\ell\},B_1\cup\{-\alpha_1\}\cup\dots \cup B_\ell\cup\{-\alpha_\ell\})\\&&\qquad=\int_0^1 
 \sum_{m_1,n_1}(1-X^{1-\alpha_1-\beta_1})
 (A_1\cup \{-\beta_1\})(m_1) (X^{\frac 12}e(\theta))^{m_1} (B_1\cup\{-\alpha_1\})(n_1) (X^{\frac 12}e(-\theta))^{n_1} \\&&\qquad  \dots 
  \sum_{m_\ell,n_\ell}(1-X^{1-\alpha_\ell-\beta_\ell})
 (A_\ell\cup \{-\beta_\ell\})(m_\ell) (X^{\frac 12}e(\theta))^{ m_\ell} (B_\ell\cup\{-\alpha_\ell\})(n_\ell) 
 (X^{\frac 12 }e(-\theta))^{ n_\ell } d\theta.
 \end{eqnarray*}
We end up with
\begin{eqnarray*}&&\int_0^1
\prod_{j=1}^\ell  \sum_{m_j,n_j=0}^\infty (X^{\frac 12}e(\theta))^{ m_j  } (X^{\frac 12} e(-\theta))^{ n_j  } \bigg(
 (A_j\cup \{-\beta_j\})(m_j)  B_j(n_j)  \\&&\qquad \qquad 
+ A_j(m_j)  (B_j\cup\{-\alpha_j\})(n_j) - 
  A_j(m_j)B_j(n_j) \bigg) d\theta, 
\end{eqnarray*}
which is equal to 
\begin{eqnarray*}(-1)^\ell
\sum_{J_1, J_2\subset \{1,\dots,\ell\}
\atop J_1\cap J_2 =\emptyset} (-1)^{|J_1|+|J_2|} \mathcal C (A\cup -\beta_{J_1},B\cup -\alpha_{J_2}). 
\end{eqnarray*}
\end{proof}

 \section{Terms with some $h_j=0$}

 Suppose that we are in the situation where
 $$h_1\dots  h_{\ell'}\ne 0  \qquad  \mbox{and} \qquad h_{\ell'+1}=\dots =h_\ell= 0.$$
 Then for each $j> \ell'$ we have 
 $$m_jN_j=n_jM_j.$$
 Since $(M_j,N_j)=1$ this implies that
 $$m_j=\kappa_j M_j \qquad \mbox{and} \qquad n_j=\kappa_j N_j$$
 for some $\kappa_j$.
Then, our sum is 
\begin{eqnarray*}  &&
\sum_{{M_j, N_j\le Q\atop (M_j,N_j)=1}\atop 
{M_1\dots M_\ell =N_1\dots N_\ell  \atop \kappa_{\ell'+1}
 M_{\ell'+1}
\dots \kappa_\ell M_\ell\le X}}
\frac{\tau_{A_{\ell'+1}}(\kappa_{\ell'+1}M_{\ell'+1})\tau_{B_{\ell'+1}}(\kappa_{\ell'+1}N_{\ell'+1})
\dots \tau_{A_\ell}(\kappa_{\ell}M_{\ell})\tau_{B_\ell}(\kappa_{\ell}N_{\ell})
}{\kappa_{\ell'+1}\dots \kappa_\ell M_{\ell'+1}\dots M_\ell}
\\&& 
\sum_{h_1\dots h_{\ell'}\ne 0 }
\sum_{{m_1\dots m_{\ell'}\le \frac{X}{\kappa_{\ell'+1}M_{\ell'+1}
\dots  \kappa_\ell  M_\ell}}\atop {(*_1), \dots (*_{\ell'}) }}
\frac{\tau_{A_1}(m_1)\dots \tau_{A_{\ell'}}(m_{\ell'}) \tau_{B_1}(n_1)\dots \tau_{B_{\ell'}}(n_{\ell'})}
{m_1\dots m_{\ell'}}
\hat \psi \left(\frac{Th_1}{2\pi m_1N_1}+\dots +\frac{Th_{\ell'}}{2\pi m_{\ell'} N_{\ell'}} \right)
\end{eqnarray*}
 where 
$$ (*_j): m_jN_j-n_jM_j=h_j. 
$$
 
Now, as before,  we replace the convolution sums (*) by their averages, i.e.
\begin{eqnarray*}
 \int_{u_1\dots u_{\ell'}\le X'  }\langle \tau_{A_1}(m_1)\tau_{B_1}(n_1)\rangle^{(*_1)}_{m_1\sim u_1}
\dots 
\langle \tau_{A_{\ell'}}(m_{\ell'})\tau_{B_{\ell'}}(n_{\ell'})\rangle^{(*_{\ell'})}_{m_{\ell'}\sim u_{\ell'}} 
\hat \psi \left(\frac{Th_1}{2\pi u_1N_1}+\dots\frac{Th_{\ell'}}{2\pi u_{\ell'} N_{\ell'}} \right)~\frac{du_1}{u_1}\dots ~\frac{du_{\ell'}}
{u_{\ell'}}
\end{eqnarray*}
where $X'$ is defined by
$$ X'=\frac{X}{\kappa_{\ell'+1}M_{\ell'+1}\dots \kappa_{\ell}M_{\ell}};$$
here $*:mN-nM=h $. We expect by the delta-method [DFI] that 
\begin{eqnarray*}
\langle \tau_{A}(m)\tau_{B}(n)\rangle^{(*)}_{m\sim u}
\sim \frac{1}{M}\sum_{q=1}^\infty r_q(h)\langle \tau_A(m)e(mN/q)\rangle_{m\sim u}
\langle \tau_B(n)e(nM/q)\rangle_{n\sim \frac{uN}{M}}
\end{eqnarray*}
with
$$\langle \tau_A(m)e(mN/q)\rangle_{m\sim u}
=\frac{1}{2\pi i}\int_{|w-1|=\epsilon}D_A(w,e\big(\frac{N}{q}\big))u^{w-1}~dw.$$
 
So, we are led to 
\begin{eqnarray*}&&
\sum_{{M_j, N_j\le Q\atop (M_j,N_j)=1}\atop 
{M_1\dots M_\ell =N_1\dots N_\ell  \atop \kappa_{\ell'+1}
 M_{\ell'+1}
\dots \kappa_\ell M_\ell\le X}}
\frac{\tau_{A_{\ell'+1}}(\kappa_{\ell'+1}M_{\ell'+1})\tau_{B_{\ell'+1}}(\kappa_{\ell'+1}N_{\ell'+1})
\dots \tau_{A_\ell}(\kappa_{\ell}M_{\ell})\tau_{B_\ell}(\kappa_{\ell}N_{\ell})
}{\kappa_{\ell'+1}\dots \kappa_\ell M_{1}\dots M_\ell}
\\&& 
 \sum_{q_j, h_j}r_{q_1}(h_1)\dots r_{q_{\ell'}}(h_{\ell'})
  \int_{T^{\ell'}\le u_1\dots u_{\ell'}\le X' } \prod_{j=1}^{\ell'}
 \frac{1}{2\pi i }\int_{|w_j-1|=\epsilon} 
 D_{A_j}(w_j,e\big(\frac{N_j}{q_j}\big))u_j^{w_j-1}dw_j
\\&&\qquad 
  \times \prod_{k=1}^{\ell'}   \frac{1}{2\pi i }\int_{|z_k-1|=\epsilon} 
 D_{B_k}(z_k,e\big(\frac{M_k}{q_k}\big))\big(\frac {u_k N_k}{M_k}\big)^{z_k-1}dz_k
  \\&&\qquad \qquad \qquad \times
\hat{\psi}\left( T\bigg(\frac{h_1}{2\pi u_1N_1}+\dots +\frac{h_{\ell'} }{2\pi u_{\ell'} N_{\ell'}}\bigg)\right) 
~\frac{du_1}{u_1}\dots \frac{du_{\ell'}}{u_{\ell'}}.
\end{eqnarray*}
 
We make the changes of variable $v_j=\frac{T|h_j|}{2\pi u_jN_j}$  for $1\le j\le \ell'$
  and bring the sums over the $h_j$ 
 to the inside; $u_1\dots u_{\ell'} <X'$ implies that 
$$ |h_1 \dots h_{\ell'}| < \frac{(2\pi)^{\ell'} X'v_1\dots v_{\ell'} N_1\dots N_{\ell'} }{T^{\ell'}}.$$ 
We detect this condition using Perron's formula in  an integral over $s$. 
Then the above is 
\begin{eqnarray*}&& \int_{0<  v_1,\dots, v_{\ell'} < \infty }
\sum_{{M_j, N_j\le Q\atop (M_j,N_j)=1}\atop 
{M_1\dots M_\ell =N_1\dots N_\ell  \atop \kappa_{\ell'+1}
 M_{\ell'+1}
\dots \kappa_\ell M_\ell\le X}}
\frac{\tau_{A_{\ell'+1}}(\kappa_{\ell'+1}M_{\ell'+1})\tau_{B_{\ell'+1}}(\kappa_{\ell'+1}N_{\ell'+1})
\dots \tau_{A_\ell}(\kappa_{\ell}M_{\ell})\tau_{B_\ell}(\kappa_{\ell}N_{\ell})
}{\kappa_{\ell'+1}\dots \kappa_\ell M_{1}\dots M_\ell} \\&&\qquad 
  \times\frac{1}{2\pi i}\int_{(2)}\sum_{q_j, h_j}\frac{r_{q_1}(h_1)\dots r_{q_{\ell'}}(h_{\ell'})}
{(h_1\dots h_{\ell'})^s}
 \prod_{j=1}^{\ell'}
 \frac{1}{2\pi i }\int_{|w_j-1|=\epsilon} 
 D_{A_j}(w_j,e\big(\frac{N_j}{q_j}\big))\big(\frac{Th_j}{2\pi v_jN_j}\big)^{w_j-1}dw_j
 \\&&\qquad \qquad \prod_{k=1}^{\ell'} 
  \frac{1}{2\pi i }\int_{|z_k-1|=\epsilon} 
 D_{B_k}(z_k,e\big(\frac{M_k}{q_k}\big))\big(\frac{Th_k}{2\pi v_kM_k}\big)^{z_k-1}dz_k
  \\&&\qquad \qquad \qquad \times \left(\frac{(2\pi)^{\ell'} X'v_1\dots v_{\ell'} N_1\dots N_{\ell'} }
  {T^{\ell'}}\right)^s \sum_{\epsilon_j\in \{-1,+1\}}
\hat{\psi}(\epsilon_1v_1+\dots +\epsilon_{\ell'} v_{\ell'})
~\frac{dv_1}{v_1}\dots \frac{dv_{\ell'}}{v_{\ell'}}~\frac{ds}{s}
\end{eqnarray*}
where the $\epsilon_j=\mbox{sgn}(h_j)$ arise from taking account of the signs of the $h_j$.
We now simplify this a bit. We combine the middle two lines into a single product over $j$ and we gather together all of the like variables:
\begin{eqnarray*}&&
\mbox{LHS}_{\ell'}:= \frac{1}{(2\pi i)^2} \iint_{(2)} X^{s+s'} \big(\frac{T }{2\pi  }\big)^{-{\ell'} s}
\sum_{{(M_1,N_1)=\dots =(M_\ell,N_\ell)=1\atop N_1\dots N_\ell=M_{1}\dots M_\ell}\atop \epsilon_j\in \{-1,+1\}} \sum_{\kappa_{\ell'+1},\dots, \kappa_\ell}  \prod_{i=\ell'+1}^\ell \frac {\tau_{A_i}(M_i\kappa_i)\tau_{B_i}(N_i \kappa_i)}
{(\kappa_i M_i)^{1+s'+s}}   \\&&\qquad  
 \int_{0<  v_1,\dots, v_{\ell'} < \infty }\hat{\psi}(\epsilon_1v_1+\dots +\epsilon_{\ell'} v_{\ell'}) \prod_{j=1}^{\ell'} 
\bigg[\frac{1}{(2\pi i)^2 }\iint_{ |w_j-1|=\epsilon\atop |z_j-1|=\epsilon} 
 M_j^{-z_j} N_j^{s+1-w_j} \sum_{h_j, q_j}\frac{r_{q_j}(h_j) }
{h_j ^{s+2-w_j-z_j}} v_j^{s+1-w_j-z_j}
 \\&&\qquad 
\qquad 
  D_{A_j}(w_j,e\big(\frac{N_j}{q_j}\big)) 
   D_{B_j}(z_j,e\big(\frac{M_j}{q_j}\big))\big(\frac{T }{2\pi  }\big)^{w_j+z_j-2}
 dw_j dz_j
~dv_j \bigg]~\frac{ds}{s}~\frac{ds'}{s'}
\end{eqnarray*}

 Now we have another  key identity:
 \begin{theorem}
 \begin{eqnarray*}
 && \operatornamewithlimits{Res}_{s'=0} \mbox{LHS}_{\ell'}= \frac{1}{2\pi i} \int_{(2)} X^s  \int_0^\infty \psi(t)
 \sum_{U(\ell')\subset A\atop V(\ell') \subset B}
 \bigg(\frac {Tt}{2\pi}\bigg)^{-\sum_{\hat \alpha\in U(\ell')\atop \hat \beta \in
  V(\ell')}(\hat\alpha+\hat \beta+s)}
   \\&&\qquad \qquad 
 \times \mathcal B(A_s-U(\ell')_s+V(\ell')^-,B-V(\ell')+{U(\ell')_s}^-,1) ~dt 
 ~\frac{ds}{s}
\end{eqnarray*}
where $U(\ell')$ denotes a set of cardinality $\ell'$ with precisely one element from each of $A_1,\dots ,A_{\ell'}$ and similarly 
$V(\ell')$ denotes a set of cardinality $\ell'$ with precisely one element from 
each of $B_1,\dots ,B_{\ell'}$ .
\end{theorem}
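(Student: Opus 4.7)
My plan is to prove Theorem 3 as a hybrid of Theorem 1 (for the ``active'' indices $j\le\ell'$, where $h_j\ne 0$) and a direct evaluation of the ``inactive'' indices ($j>\ell'$, where $h_j=0$). First I would compute $\mathrm{Res}_{s'=0}\mathrm{LHS}_{\ell'}$, which is just the value of the non-pole part at $s'=0$ and collapses the $s'$-integral. This leaves a sum over the active variables $(M_j,N_j,h_j,q_j,v_j,w_j,z_j)$ for $j\le\ell'$ of exactly the same shape as $\mathrm{LHS}_\ell$ in Sections 5--7, multiplied by an inactive contribution
$$\mathcal I:=\sum_{\substack{(M_i,N_i)=1\\ i>\ell'}}\sum_{\kappa_i\ge 1}\prod_{i>\ell'}\frac{\tau_{A_i}(\kappa_iM_i)\tau_{B_i}(\kappa_iN_i)}{(\kappa_iM_i)^{1+s}},$$
coupled to the active variables through the global condition $\prod_{i=1}^\ell M_i=\prod_{i=1}^\ell N_i$. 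The manipulations of Sections 5--6 (Lemma 1 on the $v_j$-integrals, the residue move across $s=1-\alpha_j-\beta_j$, and the functional equation for $\zeta$) then apply unchanged on the active side, reducing everything to proving, for each admissible $(U(\ell'),V(\ell'))$ and at each prime $p$, a hybrid local identity generalizing Theorem 2.

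Writing $X=1/p$ and passing to Euler factors at $p$, the identity to establish becomes
$$\sum_{\substack{\min(M_i,N_i)=0\\ \sum_i M_i=\sum_i N_i}}\prod_{j\le\ell'}\Sigma_{A_j,B_j,\alpha_j,\beta_j}(M_j,N_j)\,X^{M_j(1-\beta_j)-N_j\alpha_j}\prod_{i>\ell'}\Xi_{A_i,B_i}(M_i,N_i)=\prod_{j\le\ell'}(1-X^{1-\alpha_j-\beta_j})\,\mathcal C\bigl(A\cup V(\ell')^-,\,B\cup U(\ell')^-\bigr),$$
where $\Xi_{A_i,B_i}(M,N):=\sum_{\kappa\ge 0} A_i(M+\kappa)B_i(N+\kappa)X^{M+\kappa}$ is the local inactive factor extracted from $\mathcal I$ after separating out the $s$-dependence. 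I would prove this by decoupling the constraint $\sum_i M_i=\sum_i N_i$ exactly as in the proof of Lemma 2: write $X=Y^2$ and insert $e\bigl(\theta(\sum_iM_i-\sum_iN_i)\bigr)$, then integrate $\theta$ over $[0,1]$. This makes all $\ell$ factors independent at the cost of a uniform $e(\pm\theta)$ twist. The active indices then evaluate exactly as in the proof of Theorem 2 (the $q$-sum telescopes via Lemma 1, producing the $(1-X^{1-\alpha_j-\beta_j})$ prefactor together with the $A_j\cup\{-\beta_j\}$, $B_j\cup\{-\alpha_j\}$ content), while each inactive index evaluates by the single-factor case of Lemma 2, producing the undistorted $A_i,B_i$ content.

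The main obstacle will be to check that the $\theta$-decoupling commutes cleanly with the internal combinatorics of the $\Sigma_{A_j,B_j,\alpha_j,\beta_j}$ factors: concretely, that the three-step proof of Theorem 2 (the expansion of Lemma 1, the splitting of Lemma 2, and the reassembly of Lemma 3) still goes through when each active variable carries the twist $Y e(\theta)$ or $Y e(-\theta)$, so that swaps continue to be counted correctly and so that no additional cross terms appear between active and inactive pieces. Once this is verified, the inactive factors contribute the $A_i,B_i$-content to $\mathcal C$, the active factors contribute the $A_j\cup\{-\beta_j\}$ and $B_j\cup\{-\alpha_j\}$ content with the correct $(1-X^{1-\alpha_j-\beta_j})$ prefactors, and the product assembles into $\mathcal C\bigl(A\cup V(\ell')^-,B\cup U(\ell')^-\bigr)$ with the full sets $A=\bigcup_i A_i$ and $B=\bigcup_i B_i$, which is exactly the required right-hand side.
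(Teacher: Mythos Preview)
Your proposal is correct and follows essentially the same route as the paper: the paper likewise takes the residue at $s'=0$, applies the Section~5 lemma and the functional-equation step to the active indices $j\le\ell'$, and then reduces to a local Euler-factor identity (Theorem~4) that is exactly your hybrid identity with $\Xi_{A_i,B_i}(M,N)=\sum_{\kappa\ge 0}A_i(M+\kappa)B_i(N+\kappa)X^{M+\kappa}$ as the inactive factor. The proof of that identity in the paper also proceeds via Lemmas~1--3 in the manner you describe. Your ``main obstacle'' dissolves once Lemma~1 is applied first: since $\Sigma_{A,B,\alpha,\beta}(M,N)$ with $\min(M,N)=0$ reduces to a signed sum of three terms each of the shape $\sum_K A'(K+M)B'(K+N)X^K$, every factor (active or inactive) is already in the exact form required by the $\theta$-decoupling of Lemma~2, so no cross terms arise and the reassembly via Lemma~3 goes through unchanged with $\ell'$ in place of $\ell$.
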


\section{Preliminary reductions, again}

The result of Section 5 implies that
\begin{eqnarray*}&&
\sum_{\epsilon_j \in \{-1,+1\}}\int_{v_1,\dots ,v_{\ell'}=0}^\infty\hat{\psi}(\epsilon_1v_1+\dots
\epsilon_{\ell'}
v_{\ell'})  \prod_{j=1}^{\ell'} v_j ^{s+1-w_j-z_j} dv_j\\
&&\qquad =\int_0^\infty \psi(t) \prod_{j=1}^{\ell'} \chi(w_j+z_j-s-1) t^{w_j+z_j-s-2} ~dt.
   \end{eqnarray*}

 \section{Poles, again}
As before we use 
 \begin{eqnarray*}  &&
\langle \tau_{A}(m)\tau_{B}(n)\rangle^{(*)}_{m= u}
\sim  
\sum_{\alpha\in A\atop  \beta\in B}u^{-\alpha- \beta}M^{-1+\beta}N^{-\beta}
Z(A'_{-\alpha})Z(B'_{-\beta})  \sum_{d\mid h} \frac 1 {d^{1-\alpha- \beta}} 
\\&& \qquad \qquad \times
\sum_{q} \frac{\mu(q)(qd,M)^{1-\beta}
(qd,N)^{1- \alpha}}{q^{2-\alpha-\beta}} 
  G_{A}\big(1-\alpha,\frac{qd}{(qd,N)}\big)  
  G_{B}\big(1-\beta,\frac{qd}{(qd,M)}) .\end{eqnarray*}
  
 Inserting this into $\mbox{LHS}_{\ell'}$ we have
 \begin{eqnarray*}&&
\mbox{LHS}_{\ell'}= 
\frac{1}{(2\pi i)^2} \iint_{(2)} X^{s+s'} \big(\frac{T }{2\pi  }\big)^{-{\ell'} s}
\sum_{{(M_1,N_1)=\dots =(M_\ell,N_\ell)=1\atop N_1\dots N_\ell=M_{1}\dots M_\ell}\atop \epsilon_j\in \{-1,+1\}} \sum_{\kappa_{\ell'+1},\dots ,\kappa_\ell}  \prod_{i=\ell'+1}^\ell \frac {\tau_{A_i}(M_i\kappa_i)\tau_{B_i}(N_i \kappa_i)}
{(\kappa_i M_i)^{1+s'+s}}   \\&&\qquad  
 \int_{0<  v_1,\dots, v_{\ell'} < \infty }\hat{\psi}(\epsilon_1v_1+\dots +\epsilon_{\ell'} v_{\ell'}) 
\prod_{j=1}^{\ell'}
\bigg[ \sum_{\alpha_j\in A_j\atop
\beta_j\in B_j}   v_j^{s-1+\alpha_j+\beta_j} 
 M_j^{\beta_j-1} N_j^{s+\alpha_j}\\&&\qquad \qquad \times  
 Z((A_j)'_{-\alpha_j})Z((B_j)'_{-\beta_j})
 \sum_{h_j, q_j,d_j}\frac{1}
{h_j ^{s+\alpha_j+\beta_j}d_j^{1+s}}  \frac{\mu(q_j)(q_jd_j,M_j)^{1-\beta_j}
(q_jd_j,N_j)^{1- \alpha_j}}{q_j^{2-\alpha_j-\beta_j}}
\\ &&\qquad
   G_{A_j}\big(1-\alpha_j,\frac{q_jd_j}{(q_jd_j,N_j)}\big)  
  G_{B_j}\big(1-\beta_j,\frac{q_jd_j}{(q_jd_j,M_j)})  
\big(\frac{T }{2\pi  }\big)^{-\alpha_j-\beta_j}
~dv_j \bigg]~\frac{ds}{s}~\frac{ds'}{s'}.
\end{eqnarray*}
Now we sum over the $h_j$ to get factors $\zeta(s+\alpha_j+\beta_j)$;
these pair up with the factors $\chi(w_j+z_j-s-1)$ which turned into $ \chi(1-\alpha_j-\beta_j-s)$
after collecting the residues $w_j=1-\alpha_j$ and $z_j=1-\beta_j$ that arose from the integral over  $v_j$. 
 Then using  the functional equation for $\zeta$ we have  $\zeta(1-s-\alpha_j-\beta_j)$.
 Thus, the $s$-integrand without the $\frac{X^s}{s}$ in $\mbox{LHS}_{\ell'}$ becomes
  \begin{eqnarray*}&&
 \int_0^\infty \psi(t) \frac{1}{2\pi i}\int_{(2)} X^{s'}\big(\frac{T }{2\pi  }\big)^{-\ell' s}
\sum_{{(M_1,N_1)=\dots =(M_\ell,N_\ell)=1\atop N_1\dots N_\ell=M_1\dots M_\ell}  }
\sum_{\kappa_{\ell'+1},\dots ,\kappa_\ell}  \prod_{i=\ell'+1}^\ell
 \frac {\tau_{A_i}(M_i\kappa_i)\tau_{B_i}(N_i \kappa_i)}
{(\kappa_i M_i)^{1+s'+s}} \\&&\qquad  \sum_{U(\ell')\subset A\atop 
V(\ell')\subset B} 
 \prod_{j=1}^{\ell'}  \bigg[Z((A_j)'_{-\alpha_j})Z((B_j)'_{-\beta_j})
\zeta(1-s-\alpha_j-\beta_j) t^{-s-\alpha_j-\beta_j}
  \\&&\qquad \qquad \times 
 M_j^{\beta_j-1} N_j^{s+\alpha_j} \sum_{q_j,d_j}\frac{1}
{d_j ^{1+s }}  \frac{\mu(q_j)(q_jd_j,M_j)^{1-\beta_j}
(q_jd_j,N_j)^{1- \alpha_j}}{q_j^{2-\alpha_j-\beta_j}} 
 \\&&\quad \qquad \qquad  \times
  G_{A_j}\big(1-\alpha_j,\frac{q_jd_j}{(q_jd_j,N_j)}\big)  
  G_{B_j}\big(1-\beta_j,\frac{q_jd_j}{(q_jd_j,M_j)})  
\big(\frac{T }{2\pi  }\big)^{-\alpha_j-\beta_j}
  \bigg] \frac{ds'}{s'} ~dt, 
\end{eqnarray*}
where $U(\ell')=\{\alpha_1,\dots,\alpha_{\ell'}\}$ and $V(\ell')=\{\beta_1,\dots,\beta_{\ell'}\}$. 
Our goal is to prove that the residue of this at $s'=0$  is equal to
 \begin{eqnarray*}
 &&   \int_0^\infty \psi(t)
 \sum_{U(\ell')\subset A\atop V(\ell') \subset B}
 \bigg(\frac {Tt}{2\pi}\bigg)^{-\sum_{\hat \alpha\in U(\ell')\atop \hat \beta \in V(\ell')}
 (\hat\alpha+\hat \beta+s)}
   \\&&\qquad \qquad 
 \times \mathcal B(A_s-U(\ell')_s+V(\ell')^-,B-V(\ell')+{U(\ell')_s}^-,1) ~dt. 
\end{eqnarray*}
 This further reduces to proving that
 \begin{eqnarray*}&& \operatornamewithlimits{Res}_{s'=0}  
 \sum_{{(M_1,N_1)=\dots =(M_\ell,N_\ell)=1\atop N_1\dots N_\ell=M_1\dots M_\ell}  }
 \sum_{\kappa_{\ell'+1},\dots ,\kappa_\ell}  \prod_{i=\ell'+1}^\ell \frac {\tau_{A_i}(M_i\kappa_i)\tau_{B_i}(N_i \kappa_i)}
{(\kappa_i M_i)^{1+s'+s}} \prod_{j=1}^{\ell'}
\zeta(1-s-\alpha_j-\beta_j)  
  \\&&\qquad \bigg[ 
 M_j^{\beta_j-1} N_j^{s+\alpha_j} \sum_{ q_j,d_j}\frac{1}
{d_j ^{1+s }}  \frac{\mu(q_j)(q_jd_j,M_j)^{1-\beta_j}
(q_jd_j,N_j)^{1- \alpha_j}}{q_j^{2-\alpha_j-\beta_j}} 
 \\&&\qquad \qquad Z((A_j)'_{-\alpha_j})Z((B_j)'_{-\beta_j})
  G_{A_j}\big(1-\alpha_j,\frac{q_jd_j}{(q_jd_j,N_j)}\big)  
  G_{B_j}\big(1-\beta_j,\frac{q_jd_j}{(q_jd_j,M_j)})  
  \bigg]
  \frac{X^{s'}}{s'}
  \\&&=
 \mathcal B(A_s-U(\ell')_s+V(\ell')^-,B-V(\ell')+{U(\ell')_s}^-,1). \end{eqnarray*}

\section{Local considerations, again}
Again  we convert the above to an identity about the Euler factor of each side at a prime $p$.

 With arbitrary sets $A_j,B_j$  and numbers $\alpha_j,\beta_j$  for $j=1,2,\dots, \ell'$,
we consider
\begin{eqnarray*}&&
\mathcal Q':= \sum_{{\kappa_{\ell'+1},\dots, \kappa_\ell\atop\min(M_j,N_j)=0}\atop
\sum_{j=1}^\ell M_j =\sum_{j=1}^\ell N_j} 
\prod_{i=\ell'+1}^{\ell}  A_i(M_i+\kappa_i) B_i(N_i+\kappa_i)X^{M_i+\kappa_i}
\\&&\qquad \qquad \times
\prod_{j=1}^{\ell'} \Sigma_{A_j,B_j,\alpha_j,\beta_j}(M_j,N_j) X^{ M_j(1-\beta_j)-N_j\alpha_j},
\end{eqnarray*}
where 
\begin{eqnarray*}
 \Sigma_{A,B,\alpha,\beta}(M,N)&=&\sum_{d,j,k\atop q\le 1  }
  (-1)^q X^{d(\alpha+\beta)}{A_{-\alpha}}(j+q+d-\min(q+d,N))\\&&\qquad \times {B_{-\beta}}(k+q+d-\min(q+d,M))  
  X^{2q+d+j+k-\min(q+d,M)-\min(q+d,N)},
\end{eqnarray*} 
 as before.
 
Our identity is 
\begin{theorem}
$$\mathcal Q'=\prod_{j=1}^{\ell'} (1-X^{1-\alpha_j-\beta_j}) \mathcal C(A_1\cup\dots 
\cup A_\ell \cup\{-\beta_1,\dots, -\beta_{\ell'}\},
B_1\cup\dots \cup  B_\ell \cup \{-\alpha_1,\dots,-\alpha_{\ell'}\}).$$
\end{theorem}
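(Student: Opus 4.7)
The plan is to adapt the proof of Theorem 2 to the mixed setting, where only the first $\ell'$ factors of $\mathcal Q'$ carry the $\Sigma$-machinery, while for $i>\ell'$ the sum over $\kappa_i$ is already in the canonical form required by Lemma 2. First I apply Lemma 1 to each $j\le\ell'$: whichever of $M_j,N_j$ is zero, the telescoping argument of that lemma expresses $\Sigma_{A_j,B_j,\alpha_j,\beta_j}(M_j,N_j)X^{M_j(1-\beta_j)-N_j\alpha_j}$ as a sum of three terms of the uniform shape $\tilde A_j(K_j+M_j)\tilde B_j(K_j+N_j)X^{K_j+M_j}$, where $(\tilde A_j,\tilde B_j)$ is one of $(A_j,B_j\cup\{-\alpha_j\})$, $(A_j,B_j)$, $(A_j\cup\{-\beta_j\},B_j)$, carrying signs $+$, $-$, $+$ respectively. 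For $i>\ell'$, the factor $A_i(M_i+\kappa_i)B_i(N_i+\kappa_i)X^{M_i+\kappa_i}$ is already in this canonical form with $K_i=\kappa_i$, $\tilde A_i=A_i$, $\tilde B_i=B_i$, sign $+$.

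Distributing the three-fold choice over $j\le\ell'$ writes $\mathcal Q'$ as a signed sum of $3^{\ell'}$ terms, each of which has the exact form of the LHS of Lemma 2 applied jointly across all $\ell$ indices. Invoking Lemma 2 collapses each term to $\mathcal C(\tilde A_1\cup\dots\cup\tilde A_\ell,\tilde B_1\cup\dots\cup\tilde B_\ell)$. Parameterizing the $3^{\ell'}$ choices by disjoint subsets $J_1,J_2\subset\{1,\dots,\ell'\}$, with $J_1$ recording the indices where $\tilde A_j$ picks up $-\beta_j$ and $J_2$ those where $\tilde B_j$ picks up $-\alpha_j$, and tracking the one factor of $-1$ contributed by each $j\notin J_1\cup J_2$, yields
\begin{equation*}
\mathcal Q'=(-1)^{\ell'}\sum_{\substack{J_1,J_2\subset\{1,\dots,\ell'\}\\ J_1\cap J_2=\emptyset}}(-1)^{|J_1|+|J_2|}\,\mathcal C(A\cup -\beta_{J_1},B\cup -\alpha_{J_2}),
\end{equation*}
where $A=A_1\cup\dots\cup A_\ell$ and $B=B_1\cup\dots\cup B_\ell$.

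To finish, I apply Lemma 3 in reverse with $\ell$ there replaced by $\ell'$ and with the full unions $A,B$ above playing the role of $A_1\cup\dots\cup A_\ell$ and $B_1\cup\dots\cup B_\ell$ in that lemma's statement. Its proof relies only on the one-variable identity $X^{-\beta_j}(A\cup\{-\beta_j\})(n)=(A\cup\{-\beta_j\})(n+1)-A(n+1)$ applied separately at each index, together with the integral representation $\mathcal C(P,Q)=\int_0^1 Z_P(Ye(\theta))Z_Q(Ye(-\theta))\,d\theta$ with $X=Y^2$; neither step requires $A,B$ to arise from a partition into as many parts as there are shifts, so the lemma applies verbatim in our setting. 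This converts the displayed sum into $\prod_{j=1}^{\ell'}(1-X^{1-\alpha_j-\beta_j})\,\mathcal C(A\cup\{-\beta_1,\dots,-\beta_{\ell'}\},B\cup\{-\alpha_1,\dots,-\alpha_{\ell'}\})$, which is the RHS of Theorem 3. The only genuine obstacle is the bookkeeping required to see that the $\Sigma$-factors and the $\kappa$-factors can be packaged together into a single global application of Lemma 2; once one identifies $\kappa_i$ with the variable $K_i$ of Lemma 2, this is automatic.
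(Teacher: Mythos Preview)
Your proof is correct and follows essentially the same route as the paper's own argument: apply Lemma~1 to rewrite each $\Sigma$-factor as a three-term sum in the canonical $A_j(K_j+M_j)B_j(K_j+N_j)X^{K_j+M_j}$ shape, recognize that the $\kappa_i$-factors for $i>\ell'$ are already in this shape, invoke Lemma~2 jointly over all $\ell$ indices, and then use the $\ell'$-variable version of Lemma~3 to reassemble the signed sum over disjoint $J_1,J_2\subset\{1,\dots,\ell'\}$ into the product form. The paper presents this more tersely (it simply states the two ``critical observations'' and asserts they imply the theorem), whereas you have spelled out the sign bookkeeping and the justification for applying Lemma~3 with the full unions $A,B$ in place of an $\ell'$-fold partition; these details are exactly right.
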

 By the results of the previous section, Theorem 3 follows from Theorem 4 with $(A_i)_s$ in place of $A_i$ for $i=\ell'+1,\dots,\ell$ and with 
 $(A_j\setminus\{\alpha_j\})_s$, $B_j\setminus \{\beta_j\}$ and $\alpha_j+s$ in place of $A_j$, $B_j$, and $\alpha_j$, respectively, for $j=1,\dots,\ell'$.

 \subsection{Recall lemmas}
 Our earlier  lemma implies that if  $\min(M,N)=0$ then
 \begin{eqnarray*}
 \Sigma_{A,B,\alpha,\beta}(M,N)&=&X^{M\beta+N\alpha}\bigg(
    \sum_{K }(A\cup\{-\beta\})(K+M) B (K+N )  
  X^{K} \\&& \qquad 
-  \sum_{K }A (K+M)B(K+N )  
  X^{K }\\
&&\qquad +\sum_K A(K+M)  
  (B\cup\{-\alpha\})(K+N )X^K\bigg).
\end{eqnarray*}
 So, we can replace the $\Sigma$s in the   formula for $\mathcal Q'$ by this expression.
 
 Thus, we have
 \begin{eqnarray*}&&
\mathcal Q'= \sum_{{\kappa_{\ell'+1},\dots, \kappa_\ell\atop\min(M_j,N_j)=0}\atop
\sum_{j=1}^\ell M_j =\sum_{j=1}^\ell N_j} 
\prod_{i=\ell'+1}^{\ell}  A_i(M_i+\kappa_i) B_i(N_i+\kappa_i)X^{M_i+\kappa_i}
\\&&\qquad \qquad \times
\prod_{j=1}^{\ell'}  \bigg[
X^{M_j\beta_j+N_j\alpha_j}\bigg(
    \sum_{K_j }(A_j\cup\{-\beta_j\})(K_j+M_j) B_j (K_j+N_j )  
  X^{K_j} \\&& \qquad 
-  \sum_{K_j }A _j(K_j+M_j)B_j(K_j+N_j )  
  X^{K_j }\\
&&\qquad +\sum_{K_j} A_j(K_j+M_j)  
  (B_j\cup\{-\alpha_j\})(K_j+N_j )X^{K_j}\bigg)\bigg]
 X^{ M_j(1-\beta_j)-N_j\alpha_j}.
\end{eqnarray*}

Now, the critical observations are that 
$$ 
\sum_{K_1,\dots ,K_\ell\atop
{\sum M_j=\sum N_j  \atop 
\min(M_j,N_j)=0}}
\prod_{j=1}^\ell
A_j(K_j+M_j)B_j(K_j+N_j)X^{K_j+M_j}
 = 
\mathcal C(A_1\cup\dots \cup A_\ell, B_1\cup\dots \cup B_\ell ),
$$
as before, and  
\begin{eqnarray*}&&
\prod_{j=1}^{\ell'} (1-X^{1-\alpha_j-\beta_j}) \mathcal C(A_1\cup\dots 
\cup A_\ell \cup\{-\beta_1,\dots, -\beta_{\ell'}\},
B_1\cup\dots \cup  B_\ell \cup \{-\alpha_1,\dots,-\alpha_{\ell'}\})\\
&& \qquad = 
\sum_{J_1, J_2\subset \{1,\dots,{\ell'}\}
\atop J_1\cap J_2 =\emptyset} (-1)^{|J_1|+|J_2|+\ell'} \mathcal C (A\cup -\beta_{J_1},B\cup -\alpha_{J_2}), 
\end{eqnarray*}
where $A=A_1\cup\dots\cup A_\ell$ and $B=B_1\cup\dots \cup B_\ell$.
 
These together imply Theorem 4.
 
 \section{Multiplicities}
 
 \subsection{How many times is a given $\ell$ swap repeated?}
 Now we need to give an accounting of what we have so far. Each time we split $A$ and $B$ up into subsets $A=A_1\cup \dots \cup A_\ell$ and $B=B_1\cup\dots \cup B_\ell$ we accumulate terms that correspond to all swaps of $\alpha_j\in A_j$ and $\beta_j\in B_j$.  For a fixed decomposition of $A$ into $\ell $ subsets we clearly do not get ALL swaps of $\ell$-sized subsets of  $A$ and $B$.  Our solution to this dilemma is that we consider all decompositions of $A$ into $\ell$ disjoint non-empty subsets and similarly for $B$. Then every pair of $\ell$ sized subsets will indeed appear in the swaps. However, now two different decompositions will often lead to the same swap.  So how do we account for the overcounting?
 
 How many times will a given $\ell$-sized swap $S$ for $T$ occur?   This is equivalent to asking how many ways can $A$ be split into $\ell$ subsets where $A_j$ contains $\alpha_j$? If $A$ has $k$ elements then there are $k-\ell$ elements that can be distributed arbitrarily into $\ell$ sets. This can happen 
 in $\ell^{(k-\ell)}$ ways.  Similarly for $B$. Taking into account permutations we end up with
a multiplicity of  $\ell!^2 \ell^{2(k-\ell)}$.
\subsection{How many times does the same $(m,n)$ lead to a solution of a  $(*)$-system?} 
Our original problem is to evaluate 
$$\sum_{m,n\le X} \frac{\tau_A(m)\tau_B(n)}{\sqrt mn} \hat \psi \big( \frac{T}{2\pi}\log \frac mn\big).
$$
Note that if $A=\{\alpha_1,\dots ,\alpha_k\}$ and $B=\{\beta_1,\dots ,\beta_k\}$ then 
$$\tau_A(m)=\sum_{\mu_1\dots \mu_k=m}\mu_1^{-\alpha_1}\dots \mu_k^{-\alpha_k} 
\qquad \tau_B(n)= \sum_{\nu_1\dots\nu_k=n} \nu_1^{-\beta_1} \dots \nu_k^{-\beta_k}.$$
We  split $A$ into $A_1\cup\dots \cup A_\ell$ and $B$ into $B_1\cup\dots \cup B_\ell$; this is equivalent to splitting $\{1,2,\dots,k\}$ into $I_1\cup \dots \cup I_\ell$ where 
$A_i=\{\alpha_i:i\in I_i\}$ and also $\{1,2,\dots,k\}=J_1\cup \dots \cup J_\ell$ where 
$B_j=\{\beta_j:j\in J_j\}$. Then
$\tau_{A_i}(m_i)=\prod_{i'\in I_i}\mu_{i'}^{-\alpha_{i'}}$ and $\tau_{B_j}(n_j)=\prod_{j'\in J_j}\mu_{j'}^{-\beta_{j'}}$.  Now after this splitting we count the $m_i$ and $n_j$ according to our $(*)$-system:
\begin{eqnarray*}
(*_1): N_1 m_1 &=& M_1 n_1 +h_1\\
 &\vdots&  \\
(*_\ell): N_\ell m_\ell &=& M_\ell n_\ell +h_\ell
\end{eqnarray*}
where $M_1\dots M_\ell=N_1\dots N_\ell$.   Now let's say we have a solution of the $(*)$-system as above and let's take a collection of divisors of the $m_i$ and $n_j$. For simplicity, let's suppose that $\mu_i\mid m_i$ and $\nu_j\mid n_j$ for $1\le i,j\le \ell$. Let's write 
$$m_1=\mu_1\hat \mu_1, \dots ,m_\ell=\mu_\ell \hat \mu_\ell \qquad 
n_1=\nu_1\hat\nu_1 , \dots ,n_\ell=\nu_\ell \hat \nu_\ell.$$
The question is: How many ways are there to do this?  If we multiply the $j$th  equation in our system by $\mu_j^*\nu_j^*$, where $\prod_{j=1}^\ell \mu_j \mu_j^*=m$ and $\prod_{j=1}^\ell
\nu_j\nu_j^*=n$, then we have a new equation 
$$\tilde N_j \tilde m_j= \tilde M_j \tilde n_j +\tilde h_j$$
where 
$$\tilde m_j = \mu_j \mu_j^* \qquad \tilde n_j=\nu_j\nu_j^* \qquad \tilde M_j=M_j\mu_j^*
\hat{\nu_j}
\qquad \tilde N_j =N_j \hat \mu_j \nu_j^* \qquad \tilde h_j = h_j \mu_j^*\nu_j^*.$$
If $(\tilde M_j ,\tilde N_j)>1$ then the common factor can be divided out and out of $\tilde h_j$.
  Note that 
  $$\prod_{j=1}^\ell \tilde M_j = \prod_{j=1}^\ell M_j  \mu_j^* \hat \nu_j=mn  \prod_{j=1}^\ell \frac{M_j}{\mu_j\nu_j}=\prod_{j=1}^\ell \tilde N_j. 
  $$
 Thus, we have a new $(*)$ system but it corresponds to exactly the same $m=\mu_1\dots \mu_k$ and $n=\nu_1\dots \nu_k$ as in the old one. The number of ways to construct these $(*)$-systems
 is just the number of ways to compose the $\hat \mu_j$ as products of  the available $\{\mu_{\ell+1},\dots ,\mu_k\}$ and the $\hat \nu_j$ from the $\{\nu_{\ell+1},\dots \nu_k\}$.  But this is exactly 
 $\ell^{k-\ell}$ for the $\hat \mu_j$ and the same for the $\hat\nu_j$. Then we take into account the ordering of the $\mu_1,\dots ,\mu_\ell$ and of the $\nu_1,\dots,\nu_\ell$; this gives a factor of $\ell!^2$.
 In this way we arrive at a multiplicity  $\ell!^2 \ell^{2(k-\ell)}$ for each solution of our $(*)$-system which is the same as the multiplicity counted in the swaps of $\ell$-sets.  
 
 Note that the same argument applies whether any of the $h_j$ are 0 or not.   We need to divide out this multiplicity. 

 This explains the weight factor $w_\ell$ in (\ref{eqn:start}).
 
 \subsection{Conclusion} 
 We have found that 
 $I_{A;B}^\psi(T;X)$ can be conjecturally evaluated by two different methods which produce the same answer. One way is to use the recipe of [CFKRS]. The other way is to let $\ell$ be defined by
 $T^\ell \le X < T^{\ell+1}$. Then partition  $A$ and $B$   into $\ell $ subsets and evaluate  a convolution of $\ell$  shifted divisor sums 
 $$\sum_{m_i\le u_i\atop m_iN_i-n_iM_i=h_i} \tau_{A_i}(m_i) \tau_{B_i}(n_i) $$
 by a conjectural approach that involves the delta-method of [DFI]. 
 A rigorous theorem identifying two Euler products proves that the result of the above agrees with some of the terms arising from the recipe. 
  The terms with all $h_i\ne 0$ correspond to  $\ell$-swap terms from the recipe. The terms with 
 $\ell'$ of the $h_i$ non-zero and $\ell-\ell'$ of the $h_i$ equal to 0 give  $\ell'$ swap-terms.
Finally,  if  we sum over all possible partitions of $A$ and $B$ into non-empty subsets and account for multiplicities we achieve the desired equality between the two approaches. 
 
  A natural direction for further research is to consider other families of L-functions, for example quadratic   Dirichlet L-functions,  and to determine an arithmetic basis for the relevant moment conjectures.

 \end{document}